\tikzset{
	partial ellipse/.style args={#1:#2:#3}{
		insert path={+ (#1:#3) arc (#1:#2:#3)}
	}
}
\newtheorem{theorem}{Theorem}[section]
\newtheorem{lemma}[theorem]{Lemma}
\newtheorem{proposition}[theorem]{Proposition}
\newtheorem{corollary}[theorem]{Corollary}
\theoremstyle{definition}
\newtheorem{definition}[theorem]{Definition}
\newtheorem{problem}[theorem]{Problem}
\newenvironment{example}
  {\pushQED{\qed}\examplex}
  {\popQED\endexamplex}
\theoremstyle{remark}
\numberwithin{equation}{section}
\newcommand{\cC}{\ensuremath{\mathcal{C}}}
\renewcommand{\cR}{\ensuremath{\mathcal{R}}}
\newcommand{\cT}{\ensuremath{\mathcal{T}}}
\newcommand{\AP}{\ensuremath{\mathcal{AP}}}
\newcommand{\DP}{\ensuremath{\mathcal{DP}}}
\newcommand{\MP}{\ensuremath{\mathcal{MP}}}
\newcommand{\LH}{\ensuremath{\mathcal{LH}}}
\newcommand{\Y}{\ensuremath{\mathcal{Y}}}
\newcommand{\Cyc}{\ensuremath{\mathrm{Cyc}}}
\newcommand{\fkF}{\ensuremath{\mathfrak{F}}}
\newcommand{\fkI}{\ensuremath{\mathfrak{I}}}
\newcommand{\fkS}{\ensuremath{\mathfrak{S}}}
\newcommand{\st}{\ensuremath{\mathrm{st}}}
\DeclareMathOperator{\Fix}{Fix}
\DeclareMathOperator{\fix}{fix}
\DeclareMathOperator{\LR}{LR}
\begin{document}


\title{On pattern avoidance in matchings and involutions}  

\author[J. J. Fang]{Jonathan J. Fang}
\address[JF]{Department of Mathematics, Brandeis University, Waltham, MA 02453}
\email{jjfang@brandeis.edu}

\author[Z. Hamaker]{Zachary Hamaker}
\address[ZH]{Department of Mathematics, University of Florida, Gainesville, FL 32601}
\email{zhamaker@ufl.edu}

\author[J. M. Troyka]{Justin M. Troyka}
\address[JT]{Department of Mathematics and Computer Science, Davidson College, Davidson, NC 28035}
\email{jutroyka@davidson.edu}


\date{\today}


\keywords{pattern avoidance, permutations, involutions, perfect matchings}

\begin{abstract}
We study the relationship between two notions of pattern avoidance for involutions in the symmetric group and their restriction to fixed-point-free involutions.
The first is classical, while the second appears in the geometry of certain spherical varieties and generalizes the notion of pattern avoidance for perfect matchings studied by Jel\'inek.
The first notion can always be expressed in terms of the second, and we give an effective algorithm to do so.
We also give partial results characterizing the families of involutions where the converse holds.
As a consequence, we prove two conjectures of McGovern characterizing (rational) smoothness of certain varieties.
We also give new enumerative results, and conclude by proposing several lines of inquiry that extend our current work.
\end{abstract}

\maketitle

%
\section{Introduction}
%
\label{sec:introduction}

Let $\fkS_n$ be the symmetric group and $\mathfrak{S} = \bigsqcup_n \fkS_n$.
For each sequence $w = w_1 \dots w_n$ with distinct real values, the \textbf{standardization} $\st(w)$ is the unique permutation  with the same relative order.
For $\pi, \sigma \in \mathfrak{S}$, say $\pi$ \textbf{contains} $\sigma$ if $\pi$ has a subsequence whose standardization is $\sigma$; otherwise say $\pi$ \textbf{avoids} $\sigma$.
For $\Pi \subseteq \fkS$, let $\fkS(\Pi)$ be the subset of $\fkS$ avoiding every $\pi \in \Pi$ and $\fkS_n(\Pi) = \fkS(\Pi) \cap \fkS_n$.
Containment is a partial order on $\fkS$, and we call the order ideals \textbf{(permutation) classes}.
Clearly, $\fkS(\Pi)$ is a class.
Moreover, every permutation class $\cC$ is $\fkS(\Pi)$ for some (possibly infinite) antichain $\Pi \subseteq \fkS$, which we call the \textbf{basis} of $\cC$.
See~\cite{vatter2015permutation} for further details.

We are interested in pattern avoidance for involutions.
Let 
\[
\fkI_n = \{\pi \in \fkS_n: \pi = \pi^{-1}\} \quad \mbox{and} \quad \fkF_{2n} = \{\tau \in \fkI_{2n}: \tau(i) \neq i\quad  \forall i \in[2n]\}
\]
be the set of involutions in $\fkS_n$ and fixed-point-free involutions in $\fkS_{2n}$.
Additionally, let  $\fkI = \bigsqcup_n \fkI_n$  and $\fkF = \bigsqcup_n \fkF_{2n}$.
The poset of pattern containment restricts naturally to involutions.
Given $\Pi \subseteq \fkS$, we define $\fkI_\fkS(\Pi) = \fkI \cap \fkS(\Pi)$ and $\fkF_\fkS(\Pi) = \fkF \cap \fkS(\Pi)$, which are the set of involutions and fixed-point-free involutions that avoid $\Pi$, respectively.
These notions are well studied, beginning with~\cite{simion1985restricted}.
See~\cite{bloom2013pattern,bona2016pattern} for more recent contributions with references to previous work.

There is another notion of pattern containment for $\fkF$ we call \emph{$\fkF$-containment} that requires the underlying cycle structure also be respected.
Similarly, one can define \textbf{$\fkF$-avoidance}, which is typically referred to as pattern avoidance for (perfect) matchings~\cite{jelinek2007dyck}.
This notion generalizes work on non-crossing and non-nesting ordered set partitions~\cite{chen2007crossings}, and is used to classify properties of certain algebraic varieties~\cite{mcgovern2011closures2,hamaker2020fixed}. 
See the discussion after Definition~\ref{def:I-avoidance} for a precise definition.

In this paper we study $\fkF$-avoidance and a generalization for $\fkI$ we call \textbf{$\fkI$-avoidance}.
A variant of $\fkI$-avoidance appears in work of McGovern~\cite{mcgovern2011closures} as a tool for classifying when certain algebraic varieties are smooth or rationally smooth.
The first usage of $\fkI$-avoidance we are aware of is in recent work by the Hamaker, Marberg and Pawlowski~\cite{hamaker2018involution,hamaker2019schur}, where it is used to classify algebraic properties of certain polynomials associated to involutions.

\begin{definition}
	\label{def:I-avoidance}
	We define \textbf{$\fkI$-containment} as the transitive closure on $\fkI$ of the following three relations: $\rho$ is less than $\tau$ if it can be obtained by
\begin{enumerate}
	\item deleting a 2-cycle from $\tau$ and standardizing,
	\item deleting a fixed point from $\tau$ and standardizing;
	\item deleting one entry from the 2-cycle $(i,i+1)$ and standardizing.
\end{enumerate}
Relation (3) converts a 2-cycle to a fixed point, but note that it applies only to a 2-cycle whose entries are adjacent.
For a 2-cycle $(i,i+1)$ in $\tau$, note that an application of (1) can be expressed as an application of (3) followed by (2), hence (1) is only sometimes a cover relation.
For $\tau, \rho \in \fkI$, we say $\tau$ \textbf{$\fkI$-avoids} $\rho$ if it does not $\fkI$-contain it. Finally, for $\cT \subseteq \fkI$, let $\fkI(\cT)$ be the set of involutions that $\fkI$-avoid every $\tau \in \cT$.

\end{definition}

\begin{example}
	Let $\tau=391786452 = (13)(29)(47)(58)(6)$.
	Then $\tau$ $\fkI$-contains the involution $\rho=1432 = (1)(24)(3)$ since $\rho$ can be obtained from $\tau$ by removing the 2-cycles $(29)$ and $(58)$ and standardizing to yield $(12)(35)(4)$, and then contracting the now adjacent 2-cycle $(12)$ to a fixed point (1) and standardizing to yield $(1)(24)(3)$.
	\[
	\begin{tikzpicture}[scale=.5]
		\draw (1,0) [partial ellipse=180:0:1 and .4];
	\draw[red, very thick] (4.5,0) [partial ellipse=180:0:1.5 and .4];
	\draw (5.5,0) [partial ellipse=180:0:1.5 and .4];
	\draw (4.5,0) [partial ellipse=180:0:3.5 and .8];
	\filldraw [red] (0,0) circle (1.5pt) node[anchor=north][scale=.9] {\textbf{3}};
	\filldraw [black] (1,0) circle (1.5pt) node[anchor=north][scale=.9] {9};
	\filldraw [black] (2,0) circle (1.5pt) node[anchor=north][scale=.9] {1};
	\filldraw [red] (3,0) circle (1.5pt) node[anchor=north][scale=.9] {\textbf{7}};
	\filldraw [black] (4,0) circle (1.5pt) node[anchor=north][scale=.9] {8};
	\filldraw [red] (5,0) circle (1.5pt) node[anchor=north][scale=.9] {\textbf{6}};
	\filldraw [red] (6,0) circle (1.5pt) node[anchor=north][scale=.9] {\textbf{4}};
	\filldraw [black] (7,0) circle (1.5pt) node[anchor=north][scale=.9] {5};
	\filldraw [black] (8,0) circle (1.5pt) node[anchor=north][scale=.9] {2};
	\end{tikzpicture}
	\]
However, $\tau$ $\fkI$-avoids the involution $2134 = (12)(3)(4)$ despite containing it in the ordinary sense.
\end{example}

Restricting $\fkI$-containment to $\fkF$, we obtain \textbf{$\fkF$-containment}.
Define \textbf{$\fkF$-avoidance} and $\fkF(\cR)$ analogously (for $\cR \subseteq \fkF$).
Here, only relation (1) is applicable since there are no fixed points.
By analogy with permutation classes, an \textbf{involution class} or \textbf{$\fkI$-class} is an order ideal in $\fkI$ under $\fkI$-containment.
Similarly, an \textbf{$\fkF$-class} is an order ideal in $\fkF$ under $\fkF$-containment.
The notion of $\fkI$-basis and $\fkF$-basis extend accordingly.
Our first collection of results describe $\fkI/\fkF$-bases for natural $\fkI/\fkF$-classes.

For $\Pi \subseteq \fkS$, it is relatively easy to see that $\fkI_\fkS(\Pi)$ is an $\fkI$-class and that $\fkF_\fkS(\Pi)$ is an $\fkF$-class.
Our first result, Theorem~\ref{t:basis}, shows their bases are finite when $\Pi$ is finite, with the size of basis elements increasing by at most a factor of $2$.
This is not a hard result, but it has significant implications.
If $n$ is the largest size of a permutation in $\Pi$, then checking up to size $2n$ is an effective algorithm for computing the $\fkI$-basis of $\fkI_\fkS(\Pi)$ and the $\fkF$-basis of $\fkF_\fkS(\Pi)$.
We used this algorithm to compute the bases in Table~\ref{tab:bases-algorithm}.
In Section~\ref{sec:mcgovern}, we use Theorem~\ref{t:basis} and a slight variant to prove some conjectures of McGovern characterizing his smoothness and rational smoothness results in terms of ordinary pattern avoidance.

\begin{table}
	\adjustbox{max width=\textwidth}{
		\begin{tabular}{c|c|c}
		basis & $\fkI$-basis & $\fkF$-basis\\
		\hline
		123 & 123, 14523, 34125, 351624, 456123 & 214365, 341265, 215634, 351624, 456123\\
		132 & 132, 35142, 465132  & 2143, 465132\\
		213  & 213, 42513, 546213 & 2143, 546213\\
		231 or 312 & 3412, 4231 & 3412, 632541\\
		321 & 321 & 4321

		\end{tabular}}
		\label{tab:bases-algorithm}
		\caption{$\fkF$-bases and $\fkI$-bases for classes avoiding one permutation in $S_3$.}
\end{table}

For $\cT \subseteq \fkI$ and $\cR \subseteq \fkF$, the sets $\fkI(\cT)$ and $\fkF(\cR)$ cannot necessarily be described in terms of ordinary pattern avoidance.
To see this, observe $\fkF_{2n}(2143) = \{\tau \in \fkF_{2n}: \tau(i) > n \ \forall i \in [n]\}$, which are sometimes called the \textbf{permutational matchings} since they are in easy bijection with $\fkS_n$.
The Marcus--Tardos theorem shows every permutation class $\cC$ except for $\fkS$ has an exponential growth rate~\cite{marcus2004excluded}, so since $|\fkF_{2n}(2143)| = n!$ we see $\fkF(2143) \neq \cC \cap \fkF$ for any permutation class $\cC$.
Since $\fkI_{2n}(12) = \fkF_{2n}(2143)$, the same applies for $\fkI$-avoidance.

Our second main result, Theorem~\ref{t:singleton}, shows that $2143$ and $21$ are in some sense the only obstructions to describing $\fkI/\fkF$-classes whose $\fkI/\fkF$-bases are singletons via ordinary pattern avoidance.
As a consequence, we derive Corollary~\ref{c:dichotomy}, which shows the growth rate of $\fkI$/$\fkF$-classes is either bounded above by an exponential function or below by $\lfloor n/2 \rfloor !$.
It is natural to ask a similar question for $\fkI/\fkF$-classes whose $\fkI/\fkF$-bases contain multiple elements.
\begin{problem}
\label{prob:pattern}
Characterize the sets $\cT \subseteq \fkI$ and $\cR \subseteq \fkF$ so that 
\[
\fkI(\cT) = \fkI_\fkS(\cT) \quad \mbox{and} \quad \fkF(\cR) = \fkF_\fkS(\cR).
\]

\end{problem}

We present some partial progress towards this problem in Theorem~\ref{t:bigger_bases}.

Since we are the first to systematically study $\fkI$-avoidance, basic enumerative questions remain open.
As a first step, we enumerate $\fkI(\tau)$ for $ \tau \in \fkI_3$.
\begin{theorem} 
	\label{t:I_3-avoidance}
	For $n \ge 1$,
\[
(a)\ |\fkI_n(321)| = \binom{n}{\lfloor n/2 \rfloor}, \quad (b)\ |\fkI_n(213)| = |\fkI_n(132)| = \sum_{k=0}^{\lfloor n/2 \rfloor} \binom{n-k}{k} k!,
\]
\[
(c)\ |\fkI_n(123)| = \sum_{k=1}^n {\textstyle \left\lfloor\frac{k}{2}\right\rfloor!\, \left\lfloor\frac{n-k}{2}\right\rfloor !} \,\binom{n - \left\lfloor \frac{k}{2} \right\rfloor - 1}{n-k}.
	\]

\end{theorem}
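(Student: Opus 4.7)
For part $(a)$, I plan to apply the Robinson--Schensted correspondence, which restricts to a bijection between $\fkI_n$ and standard Young tableaux of size $n$ via $\pi \mapsto P(\pi)$. Schensted's theorem identifies the longest decreasing subsequence of $\pi$ with the number of rows of $P(\pi)$, so $\fkI_n(321)$ corresponds exactly to tableaux of shape $(n-j,j)$ with $0 \le j \le \lfloor n/2 \rfloor$. Summing the hook-length counts over these shapes telescopes to the central binomial coefficient $\binom{n}{\lfloor n/2 \rfloor}$.

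For part $(b)$, I would first establish the equidistribution $|\fkI_n(132)| = |\fkI_n(213)|$ via the reverse-complement map $\pi \mapsto \pi^{\mathrm{rc}}$ defined by $\pi^{\mathrm{rc}}(i) = n+1-\pi(n+1-i)$; a direct check shows this map preserves involutions and exchanges the patterns $132$ and $213$. To enumerate $\fkI_n(132)$, I would stratify by the number $k$ of 2-cycles and exploit the standard structural fact that in a 132-avoiding permutation, the position of the maximum value $n$ separates larger values on its left from smaller values on its right. Combined with the involution constraint, this should yield either a recursion whose closed form is $\binom{n-k}{k}\,k!$, or a direct bijection with pairs consisting of a Motzkin-like noncrossing configuration (contributing $\binom{n-k}{k}$) and a free permutation of $[k]$ (contributing $k!$).

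For part $(c)$, Schensted's theorem dually identifies $\fkI_n(123)$ with standard Young tableaux of shape $(2^a,1^b)$ satisfying $2a+b = n$. To extract the refined summation formula I would stratify $\fkI_n(123)$ by a statistic $k$ -- for example, a splitting point arising from the Erd\H{o}s--Szekeres decomposition of a 123-avoider into two decreasing subsequences, or a parameter read off from the two-column tableau. The factors $\lfloor k/2 \rfloor!$ and $\lfloor (n-k)/2 \rfloor!$ should count independent orderings of the 2-cycles within the two blocks, while $\binom{n-\lfloor k/2 \rfloor - 1}{n-k}$ governs the admissible interleavings of fixed points and paired positions across the split.

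\textbf{The main obstacle} will be part $(c)$: whereas the RSK approach cleanly gives a total count, deriving the stated summation form requires identifying the correct statistic and carefully handling the parity conditions implicit in the floor-function factorials. Parts $(a)$ and $(b)$ lean on well-established tools, so the bulk of the combinatorial work -- in particular, verifying the stated formula on small cases and then upgrading a candidate decomposition into a bijective proof -- will lie in $(c)$.
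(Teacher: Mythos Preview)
Your proposal rests on a fundamental misreading: throughout, you treat $\fkI_n(\tau)$ as the set of involutions avoiding $\tau$ in the \emph{ordinary} sense, but in this paper $\fkI(\tau)$ denotes $\fkI$-avoidance as in Definition~\ref{def:I-avoidance}, which requires the cycle structure to be respected. These notions do not generally coincide. For example, $3412 = (13)(24)$ contains $132$ as an ordinary pattern, yet it $\fkI$-avoids $132$ since neither deleting a $2$-cycle nor any other admissible operation produces $(1)(23)$. Consequently $\fkI_n(132)$ is strictly larger than $\fkI_{\fkS_n}(132)$ for $n \ge 6$ (e.g.\ $24$ versus $20$ when $n=6$), and likewise $\fkI_n(123)$ is strictly larger than $\fkI_{\fkS_n}(123)$. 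So the RSK/Schensted argument you propose for (c), and the ``position of the maximum'' structural fact you invoke for (b), simply enumerate the wrong sets; neither can yield the stated factorial-growth formulas.

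For part (a) your argument happens to land on the right answer, but only by accident: you still need the fact that $\fkI(321) = \fkI_\fkS(321)$, which is not automatic --- it follows from Theorem~\ref{t:singleton} because $321 \in \fkI(12)$. Once that is established, the Simion--Schmidt count (or your RSK argument) applies. For (b), the paper's actual mechanism is to observe that the $2$-cycles of any $\tau \in \fkI(132)$ standardize to an element of $\fkF(2143)$ (a permutational matching, giving $k!$ choices) and that every fixed point must lie to the right of every left endpoint of a $2$-cycle (giving $\binom{n-k}{k}$ placements). For (c), the paper partitions $[n]$ into the support of the left-to-right-minimum cycles and its complement, showing each piece restricts to an element of $\fkI(12)$; the parameter $k$ is the size of the first piece, and the factors $\lfloor k/2 \rfloor!$, $\lfloor (n-k)/2 \rfloor!$, $\binom{n - \lfloor k/2 \rfloor - 1}{n-k}$ count the two $\fkI(12)$-involutions and the admissible supports respectively. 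Your instinct to split into two ``decreasing'' pieces is on the right track, but the pieces must be $\fkI(12)$-avoiding involutions (each of size $m$ contributing $\lfloor m/2 \rfloor!$), not merely decreasing subsequences.
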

Theorem~\ref{t:singleton} implies that $\fkI(321) = \fkI_\fkS(321)$; the enumeration of $\fkI_\fkS(321)$ appears in~\cite{simion1985restricted}, and we give their formula in Theorem~\ref{t:I_3-avoidance}~(a).
The numbers in Theorem~\ref{t:I_3-avoidance}~(c) did not previously appear in OEIS, and they were added as~\cite[A339150]{sloane2018line} after the preprint of this paper was posted.
The numbers in Theorem~\ref{t:I_3-avoidance}~(b) are~\cite[A122852]{sloane2018line}.
Their ordinary generating function is a simple continued fraction~\cite{barry2009continued}.
More recently, Han showed~\cite{han2020hankel} this generating function is the $q=-1$ evaluation of a $q$-analogue of the Euler numbers.
Extending Han's work, Pan and Zeng gave the first organic combinatorial interpretation of these integers as certain labeled Motzkin paths called Andr\'e paths~\cite{pan2019branden}.
In Section~\ref{sec:enumeration} we prove Theorem~\ref{t:I_3-avoidance}, discuss the relationship between $\fkI_n(132)$ and prior work in greater detail, plus give a bijection from $\fkI_n(132)$ to Andr\'e paths.

For $\cR \subseteq \fkF$, it is easy to see the presence of fixed points in $\tau \in \fkI$ has no impact on whether or not $\tau \in \fkI(\cR)$.
This observation extends to a simple relationship of exponential generating functions.
Let $\cR \subseteq \fkF$ and let $F_\cR(x), I_\cR(x)$ be the exponential generating functions of $|\fkF_n(\cR)|$ and $|\fkI_n(\cR)|$, respectively.
Then
	\begin{equation}
	\label{eq:fixed-points}
	I_\cR(x) = e^x F_\cR(x).		
	\end{equation}
Here $|\fkF_n| = 0$ if $n$ is odd.
For $\fkF_\fkS(213) = \fkF(\{2143,546213\})$, a similar result appears in~\cite[Section 3.3]{bloom2013pattern}.
A version of Equation~\eqref{eq:fixed-points} upgraded to also account for the number of fixed points appears as Proposition~\ref{p:fixed-point}.
One immediate consequence is that the equality
\[
|\fkF_n(m{+}1\ \dots\ 2m\ 1\ \dots\ m)| =  |\fkF_n(2m\ 2m{-}1\ \dots\ 21)|
\]
due to Chen, Deng, Du, Stanley and Yan~\cite{chen2007crossings} extends to $\fkI$-avoidance.

\begin{corollary}
	\label{c:stanley}
	For $n,m$ positive integers,
	\[
|\fkI_n(m{+}1\ \dots\ 2m\ 1\ \dots\ m)| =  |\fkI_n(2m\ 2m{-}1\ \dots\ 21)|
\]
\end{corollary}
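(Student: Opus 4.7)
The plan is to reduce the claimed equality for $\fkI$-avoidance to the corresponding equality for $\fkF$-avoidance of Chen--Deng--Du--Stanley--Yan cited just before the corollary, and then transfer that equality via Theorem~\ref{t:fixed-points}.

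First I would verify that both patterns lie in $\fkF$, so that Theorem~\ref{t:fixed-points} is applicable. The permutation $m{+}1\ \dots\ 2m\ 1\ \dots\ m$ pairs $i$ with $i+m$, so it is a fixed-point-free involution in $\fkF_{2m}$. The reverse $2m\ 2m{-}1\ \dots\ 1$ sends $i$ to $2m+1-i$; the equation $2i = 2m+1$ has no integer solution, so this is also a fixed-point-free involution in $\fkF_{2m}$. Consequently the singletons $\cR = \{m{+}1\ \dots\ 2m\ 1\ \dots\ m\}$ and $\cR' = \{2m\ 2m{-}1\ \dots\ 1\}$ are both subsets of $\fkF$.

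Next, Theorem~\ref{t:fixed-points} yields
\[
I_\cR(x) = e^x F_\cR(x) \qquad\text{and}\qquad I_{\cR'}(x) = e^x F_{\cR'}(x).
\]
The Chen--Deng--Du--Stanley--Yan theorem gives $|\fkF_n(\cR)| = |\fkF_n(\cR')|$ for all $n$, equivalently $F_\cR(x) = F_{\cR'}(x)$. Multiplying both sides by $e^x$ gives $I_\cR(x) = I_{\cR'}(x)$, and extracting the coefficient of $x^n/n!$ produces the claimed equality $|\fkI_n(\cR)| = |\fkI_n(\cR')|$.

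There is no real obstacle here: once Theorem~\ref{t:fixed-points} is in hand, the corollary is a two-line consequence, the only input being that the two patterns happen to be fixed-point-free. More generally, the same argument shows that Theorem~\ref{t:fixed-points} automatically lifts any $\fkF$-Wilf equivalence among singletons (or sets of patterns) in $\fkF$ to the corresponding $\fkI$-Wilf equivalence.
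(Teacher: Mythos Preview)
Your proposal is correct and matches the paper's own argument: the paper likewise observes that both patterns lie in $\fkF$ and derives the corollary from the Chen--Deng--Du--Stanley--Yan equality together with Theorem~\ref{t:fixed-points} (more precisely, as a special case of Corollary~\ref{c:fpf-to-I}, which is the lifting principle you state in your final paragraph). Your verification that the two patterns are fixed-point-free is a useful detail that the paper leaves implicit.
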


\noindent\textbf{Outline:} The remainder of the paper is structured as follows.
In Section~\ref{sec:proofs}, we prove our results on $\fkI/\fkF$-bases.
Using these results, we prove McGovern's conjectures in Section~\ref{sec:mcgovern}.
The proofs of Theorems~\ref{t:I_3-avoidance} and Equation~\eqref{eq:fixed-points}, as well as further consequences, appear in Section~\ref{sec:enumeration}.
We conclude in Section~\ref{sec:future} with a discussion of the geometric context for $\fkI$ and $\fkF$-avoidance, as well as some possible directions for future work.

\ 

\noindent\textbf{Acknowledgements:} The authors would like to thank Vincent Vatter for valuable comments and sharing his understanding of the history of $\fkF$-avoidance, Sergi Elizalde for pointing out related work, Qiongqiong Pan for helpful discussions related to $|\fkI(132)|$, Monty McGovern for sharing an unpublished preprint and slides related to his conjectures and the anonymous referees whose detailed feedback has significantly improved the quality of our exposition.
We would also like to thank the organizers of FPSAC 2018, where this work began.

\section{Results on Bases}
\label{sec:proofs}

For $I = \{i_1 < \dots < i_k\} \subseteq [n]$ and $\pi \in \fkS_n$, let $\st(\pi|_I)$ denote the standardization $\st(\pi_{i_1} \dots \pi_{i_k})$.
We state and prove our first main result, which shows for $\Pi \subseteq \fkS$ finite that the $\fkI$- and $\fkF$-bases of $\fkI_\fkS(\Pi) = \fkI \cap \fkS(\pi)$ and $\fkF_\fkS(\Pi) = \fkF \cap \fkS(\Pi)$ are finite.

\begin{theorem}
\label{t:basis}
	Let $\Pi$ be a set of permutations, each with size at most $n$.
	Then each element in the $\fkI$-basis of $\fkI_\fkS(\Pi)$ or $\fkF$-basis of $\fkF_\fkS(\Pi)$ has size at most $2n$.
\end{theorem}

\begin{proof}
	For $\Pi \subseteq \bigsqcup_{k=1}^n \fkS_n$, let $\tau \in \fkI$ have size $>2n$ and contain some $\pi \in \Pi$.
	We will show $\tau$ is not in the $\fkI$-basis of $\fkI_\fkS(\pi)$, hence every basis element has size at most $2n$.
	
	Since $\tau$ contains $\pi$, there exists $I = \{i_1 < i_2 < \dots < i_k\}$ so that $\st(\tau|_I) = \pi$.
	Let $J = \{\tau_i: i \in I\}$.
	By transpose symmetry, we see $\st(\tau|_J) = \pi^{-1}$.
	Let $\theta = \st(\tau|_{I \cup J})$.
	We claim $\theta$ has the following properties:
	\begin{enumerate}
		\item $\theta \notin \fkS(\Pi)$,
		\item $\theta$ has size at most $2k$,
		\item $\theta$ is $\fkI$-contained in $\tau$.
	\end{enumerate}
	Property (1) follows by construction since $\theta$ contains $\tau$, while (2) is immediate.
	Property (3) follows by observing that $\theta$ can be obtained from $\tau$ by deleting all fixed points and 2-cycles outside of $I \cup J$.
	Since $\tau$ $\fkI$-contains $\theta \in \bigsqcup_{k=1}^{2n} \fkI_k$ and $\theta$ contains $\pi$, the result follows for $\fkI$-avoidance.
	By assuming $\tau \in \fkF_\fkS(\Pi)$, the result extends immediately to $\fkF$-avoidance.
		\end{proof}

Define $\fkI'$-containment using only relations (1) and (2) from Definition~\ref{def:I-avoidance} and define $\fkI'$-avoidance analogously.
Since our proof of Theorem~\ref{t:basis} does not make use of relation (3) in the Definition~\ref{def:I-avoidance}, we have the following corollary.

\begin{corollary}
	\label{c:I'-avoidance}
	For $\Pi \subseteq \bigsqcup_{k=1}^n \fkS_n$, the $\fkI'$-basis of $\fkI_\fkS(\Pi)$ is contained in $\bigsqcup_{m=1}^{2n} \fkI_m$.
\end{corollary}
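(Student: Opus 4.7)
The plan is to observe that the proof of Theorem~\ref{t:basis} already gives the stronger conclusion once one verifies that $\fkI_\fkS(\Pi)$ is genuinely an order ideal under $\fkI'$-containment, so that the notion of an $\fkI'$-basis is well defined.

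First I would check that relations (1) and (2) of Definition~\ref{def:I-avoidance} are both instances of ordinary pattern containment. Deleting a fixed point and standardizing removes one position from the one-line notation and standardizes the remainder, which is exactly a subsequence in the classical sense; deleting a 2-cycle $(i,j)$ similarly removes two positions. Consequently, if $\tau \in \fkS(\Pi)$ and $\rho$ is $\fkI'$-contained in $\tau$, then $\rho \in \fkS(\Pi)$ as well, so $\fkI_\fkS(\Pi)$ is closed downward under $\fkI'$-containment. Since both relations strictly decrease the underlying size, $\fkI'$-containment is antisymmetric, and the $\fkI'$-basis of $\fkI_\fkS(\Pi)$ is the set of $\fkI'$-minimal elements of $\fkI \setminus \fkI_\fkS(\Pi)$.

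Next I would revisit the construction of $\theta = \st(\tau|_{I \cup J})$ in the proof of Theorem~\ref{t:basis}. Because $\tau(I) = J$ and $\tau(J) = I$, the set $I \cup J$ is setwise fixed by $\tau$, so $\tau$ restricts to an involution on the complement $[n] \setminus (I \cup J)$; in particular, this complement is a disjoint union of fixed points and 2-cycles of $\tau$. Therefore $\theta$ can be obtained from $\tau$ by a sequence of deletions of these fixed points and 2-cycles, each step being an application of relation (1) or (2). Hence $\theta$ is $\fkI'$-contained in $\tau$, not merely $\fkI$-contained.

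Combining these two observations yields the corollary: any $\fkI'$-basis element $\tau$ of $\fkI_\fkS(\Pi)$ lies in $\fkI \setminus \fkI_\fkS(\Pi)$, so it contains some $\pi \in \Pi$ in the classical sense, producing an associated $\theta$ of size at most $2k \leq 2n$ with $\theta \notin \fkS(\Pi)$ and $\theta$ an $\fkI'$-sub-element of $\tau$. By $\fkI'$-minimality we must have $\theta = \tau$, whence $\tau \in \bigsqcup_{m=1}^{2n} \fkI_m$. I do not anticipate a serious obstacle; the only subtle point is the book-keeping that $I \cup J$ is $\tau$-invariant, which is what allows us to avoid invoking relation (3) in passing from $\tau$ down to $\theta$.
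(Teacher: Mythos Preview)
Your proposal is correct and follows exactly the paper's own approach: the paper simply remarks that the proof of Theorem~\ref{t:basis} never invokes relation~(3), and you have spelled out precisely why---namely that $I\cup J$ is $\tau$-invariant so its complement decomposes into fixed points and $2$-cycles. The additional care you take in verifying that $\fkI_\fkS(\Pi)$ is downward closed under $\fkI'$-containment is a welcome clarification that the paper leaves implicit.
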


We will need Corollary~\ref{c:I'-avoidance} in Section~\ref{sec:mcgovern}.

Each $\pi \in \fkS_n$ has an associated plot in the $(x,y)$ plane, consisting of the points $(i,\pi(i))$ for all $i \in [n]$. The fixed points of $\pi$ are precisely the points of its plot that lie on the main diagonal $y=x$, and $\pi$ is an involution if and only if it is symmetric under reflection across the main diagonal $y=x$.

For permutations $\pi, \sigma$, the \textbf{skew sum} $\pi \ominus \sigma$ is the permutation obtained by juxtaposing the plots of $\pi$ and $\sigma$ so that $\sigma$ lies below and to the right of $\pi$.
That is, if $\pi \in \fkS_a$ and $\sigma \in \fkS_b$, then $\pi \ominus \sigma \in \fkS_{a+b}$ is defined by
\[ (\pi \ominus \sigma)(i) = \begin{cases}
b+\pi(i) & \text{if $1 \le i \le a$}; \\
\sigma(i-a) & \text{if $a+1 \le i \le a+b$}.
\end{cases} \]
We are now prepared to tackle our second main result.

\begin{theorem}
	\label{t:singleton}
	Let $\tau \in \fkI$, $\rho \in \fkF$.
	Then $\fkI(\tau) = \fkI_\fkS(\tau)$ (or $\fkF(\rho) = \fkF_\fkS(\rho)$) if and only if $\tau \in  \fkI(12)$ (or $\rho \in  \fkF(2143)$).
\end{theorem}

\begin{proof}
Assume that $\tau\in\fkI_n(12)$ (or $\rho\in\fkF_{2n}(2143)$).
There is a well-defined bijection
	\begin{align*}
	f:\fkI_n(12)&\rightarrow\fkS_{\lfloor \frac{n}{2}\rfloor}\\
	\tau&\mapsto\sigma \text{ iff } \sigma(k)={\textstyle \tau(\lceil \frac{n}{2}\rceil +k)} \text{ for } {\textstyle 1\leq k\leq \lfloor\frac{n}{2}\rfloor}.
	\end{align*}
Since $\fkF_{2n}(2143) = \fkI_{2n}(12)$, we see $f$ is well defined on $\fkF_{2n}(2143)$ as well.
Let $\sigma = f(\tau)$.
Then, if $\tau$ is of even size we may write $\tau = \upsilon \ominus \sigma$ for some $\upsilon\in S_{\frac{n}{2}}$.
Since $\tau$ is an involution $\upsilon = \sigma^{-1}$ so that $\tau = \sigma^{-1} \ominus \sigma$.
Similarly, if $\tau$ is of odd size we have $\tau = \sigma^{-1} \ominus 1 \ominus \sigma$.

\begin{figure}
\[ \begin{tikzpicture}[scale=0.5]
\draw (0,0) [very thick] rectangle (9,9);
\draw (0,0) [dashed] -- (9,9);
\draw (7,1) rectangle (4,5);
\draw (4,5) rectangle (2,8);
\node at (3,6.5) {\Large$\sigma$};
\node at (5.5,3) {\Large$\sigma^{-1}$};
\end{tikzpicture} \hspace{1in}
\begin{tikzpicture}[scale=0.5]
\draw (0,0) [very thick] rectangle (9,9);
\draw (0,0) [dashed] -- (9,9);
\draw (2,8) rectangle (5,4);
\draw (5,4) rectangle (7,1);
\node at (3.5,6) {\Large$\sigma$};
\node at (6,2.5) {\Large$\sigma^{-1}$};
\end{tikzpicture} \]
\caption{In the diagram on the left, all of $\sigma$ lies above the main diagonal of $\kappa$. In the diagram on the right, all of $\sigma^{-1}$ lies below the main diagonal. In these diagrams, we use different-sized rectangles to represent $\sigma$ and $\sigma^{-1}$, not squares and not of equal size, to emphasize the fact that $\sigma$ and $\sigma^{-1}$ may be spread out across various entries of $\kappa$, both vertically and horizontally. \label{fig:abovebelow}}
\end{figure}
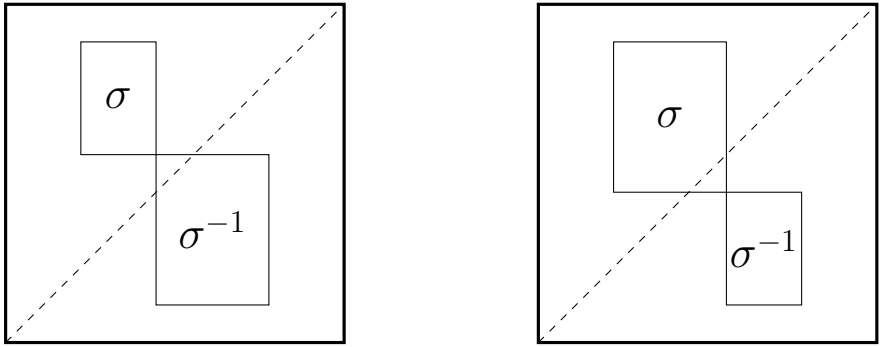

Let  $\tau = \sigma \ominus \sigma^{-1} \in \fkI(12)$ and $\kappa$ be an involution that contains $\tau$.
We will show $\kappa$ $\fkI$-contains $\tau$.
Since $\kappa$ is an involution, its plot is symmetric under reflection across the main diagonal.
In the copy of $\tau$ that occurs in $\kappa$, either all of $\sigma$ lies above the main diagonal of $\kappa$ or all of $\sigma^{-1}$ lies below the main diagonal of $\kappa$ --- see Figure \ref{fig:abovebelow}.
Without loss of generality, assume $\sigma$ lies above the main diagonal of $\kappa$.
In this case, the reflection of $\sigma$ across the main diagonal of $\kappa$ is another copy of $\sigma^{-1}$, and $\sigma$ and this new copy of $\sigma^{-1}$ are arranged in a skew sum --- so they form a new copy of $\tau$ in $\kappa$ (see Figure \ref{fig:newcopy}).
Furthermore, since the $\sigma$ and $\sigma^{-1}$ in this copy of $\tau$ are reflections of each other across the main diagonal of $\kappa$, the entries that share a $2$-cycle in $\tau$ also share a $2$-cycle in $\kappa$, and so this occurrence of $\tau$ is $\fkI$-contained in $\kappa$.

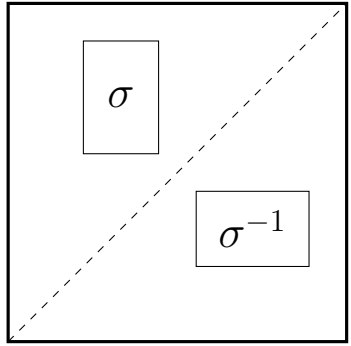
\begin{figure}
\[ \begin{tikzpicture}[scale=0.5]
\draw (0,0) [very thick] rectangle (9,9);
\draw (0,0) [dashed] -- (9,9);
\draw (8,2) rectangle (5,4);
\draw (4,5) rectangle (2,8);
\node at (3,6.5) {\Large$\sigma$};
\node at (6.5,3) {\Large$\sigma^{-1}$};
\end{tikzpicture} \]
\caption{When $\sigma$ lies entirely above the main diagonal of $\kappa$, the reflection of $\sigma$ across the main diagonal is a copy of $\sigma^{-1}$, and together these occurrences of $\sigma$ and $\sigma^{-1}$ form an occurrence of $\tau$ that is $\fkI$-contained in $\kappa$. \label{fig:newcopy}}
\end{figure}

The same argument extends immediately to $\kappa \in \fkF(2143)$.

Now let $\tau = \sigma \ominus 1 \ominus \sigma^{-1} \in \fkI(12)$ and $\kappa$ be an involution that contains $\tau$.
We can apply the same reasoning as in the case above, though care must be taken with the one fixed point of $\tau$, which is the ``$1$'' summand in the center of $\tau$.
In a given copy of $\tau$ in $\kappa$, if the fixed point of $\tau$ lies on the main diagonal of $\kappa$, then it is also a fixed point of $\kappa$, and so there is an $\fkI$-occurrence of $\tau$ consisting of the fixed point, the copy of $\sigma$, and the reflection of $\sigma$ just as before.
On the other hand, if the fixed point of $\tau$ does not lie on the main diagonal of $\kappa$, assume without loss of generality that it lies above the main diagonal.
Then there is an occurrence of $\sigma \ominus 21 \ominus \sigma^{-1}$ consisting of the fixed point, its reflection across the main diagonal of $\kappa$, the copy of $\sigma$, and the $\sigma^{-1}$ obtained by reflecting the copy of $\sigma$ across the main diagonal.
The center $21$ of the $\sigma \ominus 21 \ominus \sigma^{-1}$ is a $2$-cycle in $\kappa$, so we can delete the 1- and 2-cycles not in $\sigma \ominus 21 \ominus \sigma^{-1}$ and then contract the $21$ to form an $\fkI$-occurrence of $\tau$ in $\kappa$.

We have shown the notions of containment and $\fkI$-containment  coincide for $\tau\in\fkI(12)$, and likewise for $\fkF$-containment when $\rho\in\fkF(2143)$.
That is,
\[
	\fkI(\tau) = \fkI_\fkS(\tau) \quad \mbox{and} \quad \fkF(\rho) = \fkF_\fkS(\rho).		
\]
We prove the converse by contrapositive.
If $\tau$ $\fkI$-contains 12, then $\fkI(12) \subseteq \fkI(\tau)$, so 
\[
|\fkI_n(\tau)|\geq |\fkI_n(12)|=\left\lfloor \frac{n}{2}\right\rfloor !
\]
	for all $n$.
By the Marcus--Tardos theorem, we know that $|\fkS_n(\tau) \cap \fkI|$ is bounded by an exponential, so $\fkI(\tau) \neq \fkS(\tau) \cap \fkI = \fkI_\fkS(\tau)$.
The same argument applies when $\rho$ $\fkF$-contains $2143$.
\end{proof}

As a consequence, we see that the growth of $\fkI/\fkF$ classes exhibits a stark dichotomy.
\begin{corollary}
	\label{c:dichotomy}
	Let $\cT \in \fkI$, $\cR \in \fkF$.
	The sequences $\{|\fkI_{2n}(\cT)|\}$ and $\{|\fkI_{2n+1}(\cT)|\}$ (resp.\ $\{|\fkF_{2n}(\cR)|\}$) are bounded above by an exponential function if $\cT \cap \fkI(12) \neq \varnothing$ (resp.\ $\cR \cap \fkF(2143) \neq \varnothing$) and bounded below by $n!$ if $\cT \cap \fkI(12) = \varnothing$ (resp.\ $\cR \cap \fkF(2143) = \varnothing$).
\end{corollary}

\begin{proof}
	First suppose there exists $\tau \in \cT \cap \fkI(12)$.
	Then $\fkI_n(\cT) \subseteq \fkI_n(\tau)$, so
	\[
	|\fkI_n(\cT)| \le |\fkI_n(\tau)| = |\fkI_{\fkS_n}(\tau)| \le |\fkS_n(\tau)| \leq c^n
	\]
	for some constant $c$.
	Here, the equality is Theorem~\ref{t:singleton} and the final inequality is the Marcus--Tardos theorem.
	
	Now suppose $\cT \cap \fkI(12) = \varnothing$; then  $\fkI(12) \subseteq \fkI(\cT)$, hence
	\[
	n! \leq |\fkI_{2n}(\cT)|, |\fkI_{2n+1}(\cT)|. 
	\]
	The same argument proves the statement for $\fkF$-avoidance, with $2143$ instead of $21$.
\end{proof}

Using Theorem~\ref{t:singleton} and Corollary~\ref{c:dichotomy}, we present some partial progress towards solving Problem~\ref{prob:pattern}.
\begin{theorem}
	\label{t:bigger_bases}
	Let $\cT \subseteq \fkI$ and $\cR \subseteq \fkF$ be non-empty.
	Then (A) implies (B), and conversely (B) implies the weaker condition (C):
	\begin{enumerate}
		\item[$(A)$] $\cT \subseteq \fkI(12)$ (respectively $\cR \subseteq \fkF(2143)$).
		\item[$(B)$] $\fkI(\cT) = \fkI_\fkS(\cT)$ (respectively $\fkF(\cR) = \fkF_\fkS(\cR)$).
		\item[$(C)$] $\cT \cap \fkI(12)$ (respectively $\cR \cap \fkF(2143)$) is non-empty.
	\end{enumerate}
\end{theorem}

\begin{proof}
	To see that $(A)$ implies $(B)$,  we compute
\[
	\fkI(\cT) = \bigcap_{\tau\in\cT} \fkI(\tau)\\
	= \bigcap_{\tau\in\cT} \fkI_\fkS(\tau)\\
	= \fkI_\fkS(\cT)
	\]
with the second equality by Theorem 1.2. 
The implication $(B) \Rightarrow (C)$ follows from Corollary~\ref{c:dichotomy}.
Identical reasoning implies the equivalent results for $\fkF$-avoidance.
\end{proof}

From the $123$ row in Table~\ref{tab:bases-algorithm}, we see the reverse implication $(B) \Rightarrow (A)$ cannot hold: indeed, if $\cT$ is the $\fkI$-basis of $\fkI_\fkS(123)$, then $\cT$ satisfies $(B)$ (as can be checked by computer using Theorem~\ref{t:basis}) but does not satisfy $(A)$; the same holds for the $\fkF$-basis of $\fkF_\fkS(123)$.
To see $(C) \Rightarrow (B)$ does not hold, let $\cR =  \{2143, 456123\}$ and note $65872143$ is in $\fkF(\cR)$ and $\fkI(\cR)$ but it contains $2143$ as an ordinary permutation pattern.

\section{McGovern's conjectures}
\label{sec:mcgovern}

Theorem~\ref{t:basis} and Corollary~\ref{c:I'-avoidance} can be used to reduce two recent conjectures of McGovern to finite computations.
We give a brief overview of McGovern's work, which is at the intersection of Lie theory, representation theory and algebraic geometry.
Recall the Type A flag variety $FL(n) = GL(n)/B$ where $GL(n)$ is the set of complex invertible $n \times n$ matrices and $B$ is its Borel subgroup of upper triangular matrices.
The left action of the orthogonal group $O(n)$ decomposes $FL(n)$ into finitely many orbits $\{Y_\tau\}_{ \tau \in \fkI_n}$.
Similarly, for $n$ even the left action of the symplectic group $Sp(n)$ decomposes $FL(n)$ into orbits $\{Z_\rho\}_{\rho \in \fkF_n}$.
Equivalently,
\[
GL(n) = \bigsqcup_{\tau \in \fkI_n} O(n) M^\tau B \quad \mbox{and} \quad GL(2n) = \bigsqcup_{\rho \in \fkF_{2n}} Sp(2n) M^\rho B
\]
where $M^\tau$ is the permutation matrix of $\tau$.
There has been a great deal of work by many mathematicians trying to understand geometric properties of these orbits and their closures.
McGovern has given the following characterizations of smoothness (the variety is a manifold) and rational smoothness (a technical condition that is roughly equivalent to the variety satisfying Poincar\'e duality) for these orbits.

\begin{theorem}[{\cite[Theorem 1]{mcgovern2011closures2}}]
	\label{t:mcgovern-F}
	For $\rho \in \fkF_{2n}$, the orbit $Z_\rho$ is smooth if and only if it is rationally smooth if and only if $\rho \in \fkF(\Pi')$ where
	\begin{align*}
\Pi' =  	\{ & 351624, 64827153, 57681324, 53281764, 43218765, 65872143, 
21654387, \\
	&21563487, 34127856, 43217856, 34128765, 36154287, 21754836, 63287154, \\
&54821763, 46513287, 21768435\}.
	\end{align*}

\end{theorem}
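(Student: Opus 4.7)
Since this theorem is explicitly credited to McGovern, the natural plan is not to reprove it from scratch but to describe the combinatorial strategy that matches the framework of this paper, assuming the necessary geometric input. The approach has two parts: a monotonicity result showing that rational singularity is upward-closed under $\fkF$-containment, and a finite enumeration to identify the minimal singular orbits.

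First I would show that the set $\mathcal{S} = \{\rho \in \fkF : Z_\rho \text{ is not rationally smooth}\}$ is an $\fkF$-filter. Within $\fkF$, the only $\fkF$-containment relation is deletion of a 2-cycle, so it suffices to prove that if $\rho'$ is obtained from $\rho$ by deleting a 2-cycle and $Z_{\rho'}$ is rationally singular, then $Z_\rho$ is as well. The standard geometric tool is a transverse slice at a point of the smaller orbit inside $\overline{Z_\rho}$, exhibiting a local product decomposition one factor of which is isomorphic to a neighborhood in $\overline{Z_{\rho'}}$. An equivalent combinatorial route proceeds via Kazhdan--Lusztig--Vogan polynomials for the symmetric pair $(GL(2n), Sp(2n))$, checking that rational smoothness is monotone with respect to the relevant inclusion of orbit posets.

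Given monotonicity, $\mathcal{S}$ admits an $\fkF$-basis $\Pi'$ consisting of minimal singular elements. I would then bound the sizes of elements of $\Pi'$: any minimal singular $\rho$ cannot have a 2-cycle whose deletion preserves singularity, so its cycle structure is highly constrained, and a careful analysis of the slice or of KLV data should yield an explicit $N$ with $\Pi' \subseteq \bigsqcup_{m=1}^N \fkF_{2m}$. A computer enumeration of $\fkF_{2m}$ for $m \leq N$, testing each $\rho$ for rational smoothness (via KLV computation or via palindromicity of rank generating functions of Bruhat-like intervals in the $Sp(2n)$-orbit poset), then identifies the $17$ minimal singular involutions listed in $\Pi'$.

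The main obstacle is the $\fkF$-monotonicity of rational singularity, which genuinely requires geometric input beyond the combinatorics of this paper. The size-bound step also requires care, since a priori one must rule out arbitrarily large sporadic minimal singular orbits. Once both are in hand, verifying the explicit list of $17$ patterns is a routine finite computation, entirely in the spirit of the algorithm used to populate Table~\ref{tab:bases-algorithm}.
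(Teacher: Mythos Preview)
The paper does not prove this theorem; it is quoted verbatim from McGovern's work \cite{mcgovern2011closures2} as external input, with no argument supplied. There is therefore nothing in the paper to compare your proposal against, and you correctly recognize this at the outset.

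Your outline --- upward-closure of rational singularity under $\fkF$-containment via transverse slices or KLV polynomials, followed by a bounded enumeration of minimal singular orbits --- is a reasonable sketch of how such classifications are typically established, and is broadly in the spirit of McGovern's original argument. But none of it appears in the present paper, so describing it as ``the combinatorial strategy that matches the framework of this paper'' is misleading. The paper's actual contribution involving Theorem~\ref{t:mcgovern-F} is Corollary~\ref{c:mcgovern}(2), which runs in the opposite direction: it takes McGovern's $\fkF$-avoidance characterization as a black box and uses Theorem~\ref{t:basis} to show that $\fkF(\Pi') = \fkF_\fkS(\Pi')$, i.e., that the same class is cut out by ordinary pattern avoidance. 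That is a purely combinatorial finite check and requires no geometric input whatsoever.
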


Let $\fkI(21*43)$ be the set of involutions $\tau$ so that for every pair of cycles $(a,b),(c,d) \in \Cyc(\tau)$ with $a < b < c< d$, the number of fixed points between $b$ and $c$ is odd, i.e., $|\Fix(\tau) \cap [b,c]| \in 2\mathbb{N}+1$.
For example $21354 \in \fkI(21*43)$, but $2143 \notin \fkI(21*43)$ since there are an even number of fixed points (zero) between $1$ and $4$.

\begin{theorem}[{\cite[Theorem 1]{mcgovern2020closures}}]
	\label{t:mcgovern-I}
	For $\tau \in \fkI_n$, the orbit $Y_\tau$ is rationally smooth if and only if $\tau \in \fkI'(\Pi) \cap \fkI(21*43)$ where
	\begin{align*}
		\Pi = \{ & 14325,21543,32154, 154326, 124356, 351624, 132546, 426153, 153624, 351426,
		\\& 1243576, 2135467,
2137654, 4321576, 5276143, 5472163, 1657324, 4651327, \\ &57681324, 65872143, 13247856,34125768,34127856,64827153\}.
	\end{align*}	
\end{theorem}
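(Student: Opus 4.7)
The plan is to deduce this theorem from McGovern's geometric work together with the basis theorem Corollary~\ref{c:I'-avoidance}. The strategy parallels the proof of Theorem~\ref{t:mcgovern-F}, but requires additional care to accommodate the parity constraint encoded by $\fkI(21*43)$.

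First, I would isolate the condition $\fkI(21*43)$. This condition depends on the parity of fixed points between certain pairs of cycles and is \emph{not} preserved by the cover relations of Definition~\ref{def:I-avoidance}: deleting a fixed point (relation (2)) or converting a $2$-cycle to a fixed point (relation (3)) can alter the parity. Hence $\fkI(21*43)$ must appear as a separate intersection rather than being absorbed into an $\fkI'$-basis. This also explains the use of $\fkI'$-containment (relations (1) and (2) only) rather than full $\fkI$-containment in the characterization: relation (3) would not in general preserve the rational smoothness property, so it must be excluded from the cover relations used to define the basis.

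Next, I would argue that the set $\cS$ of $\tau \in \fkI(21*43)$ with $Y_\tau$ rationally smooth is closed under $\fkI'$-containment. If this monotonicity holds, then $\cS$ is an $\fkI'$-class, and Corollary~\ref{c:I'-avoidance} bounds the size of its $\fkI'$-basis by $2n$, where $n$ is the maximum size of an obstruction appearing in McGovern's original geometric criterion. A finite computer search over involutions up to this size, testing each for rational smoothness and the parity condition, then identifies the $\fkI'$-minimal obstructions as exactly the list $\Pi$.

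The main obstacle is the monotonicity step: one must show that deleting a fixed point or a $2$-cycle from a rationally smooth $\tau \in \fkI(21*43)$, when the result still lies in $\fkI(21*43)$, preserves rational smoothness of the associated orbit closure. This requires geometric input describing how the closures $Y_\tau$ degenerate under ``subinvolution'' restriction, and is the most delicate piece that must be extracted from McGovern's original argument and imported into the present framework. Once this closure property is established, the rest of the proof reduces to a finite verification driven by Corollary~\ref{c:I'-avoidance}.
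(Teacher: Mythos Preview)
This theorem is not proved in the paper at all: it is quoted verbatim from McGovern~\cite{mcgovern2020closures} as background, with no proof given here. The paper's own contribution in Section~\ref{sec:mcgovern} is Corollary~\ref{c:mcgovern}, which concerns only Theorems~\ref{t:mcgovern-F} and~\ref{t:mcgovern-I2} (the $\fkF$-case and the \emph{smoothness} case), and is proved by a finite computer check justified by Theorem~\ref{t:basis} and Corollary~\ref{c:I'-avoidance}. The paper explicitly remarks that its methods do \emph{not} apply to Theorem~\ref{t:mcgovern-I}, precisely because $\fkI(21*43)$ is not an $\fkI$-class.

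Your proposal therefore attempts something the paper does not do. More substantively, it is circular: you plan to ``deduce this theorem from McGovern's geometric work,'' but Theorem~\ref{t:mcgovern-I} \emph{is} McGovern's geometric work. The monotonicity step you flag as ``the main obstacle'' --- that rational smoothness is inherited under $\fkI'$-cover relations --- is not an auxiliary lemma to be imported; it is essentially the content of McGovern's theorem itself. Corollary~\ref{c:I'-avoidance} only tells you that \emph{if} a set of involutions of the form $\fkI_\fkS(\Pi)$ has a finite $\fkI'$-basis, then that basis lives in bounded size; it says nothing about an arbitrary $\fkI'$-downward-closed set defined geometrically, and it certainly does not supply the geometric closure property you need. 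So your outline does not reduce the theorem to a finite check; it assumes the hard direction and then proposes to verify the easy combinatorial residue.
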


One direction of Theorem~\ref{t:mcgovern-I} first appeared as~\cite[Theorem 1]{mcgovern2011closures}.
A similar result also applies for smoothness.

\begin{theorem}[{\cite[Theorem 2]{mcgovern2020closures}}]
	\label{t:mcgovern-I2}
For $\tau \in \fkI_n$, the orbit $Y_\tau$ is smooth if and only if $\tau \in \fkI'(\Pi \cup \{2143,1324\})$ with $\Pi$ as in Theorem~\ref{t:mcgovern-I}.
\end{theorem}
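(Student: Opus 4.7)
The proof follows the same template as that of Theorem~\ref{t:mcgovern-I} earlier in this section: we take as input McGovern's own smoothness criterion (stated in classical pattern-avoidance form), apply Corollary~\ref{c:I'-avoidance} to locate the $\fkI'$-basis of the smoothness class inside a finite window, and verify by direct enumeration that this basis is exactly $\Pi \cup \{2143, 1324\}$.

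More concretely, suppose McGovern's criterion takes the form: $Y_\tau$ is smooth if and only if $\tau \in \fkI_\fkS(\Pi_0)$ for some finite $\Pi_0 \subseteq \fkS$. Then Corollary~\ref{c:I'-avoidance} guarantees that the $\fkI'$-basis of $\fkI_\fkS(\Pi_0)$ is contained in $\bigsqcup_{m=1}^{2n}\fkI_m$, where $n$ is the largest size appearing in $\Pi_0$. One runs the effective algorithm mentioned after Theorem~\ref{t:basis} to enumerate this $\fkI'$-basis and checks that it equals $\Pi \cup \{2143, 1324\}$.

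A useful combinatorial observation that clarifies the shape of the answer (and prunes the search) is the containment $\fkI'(2143) \subseteq \fkI(21*43)$. Indeed, if $\tau$ contains a pair of 2-cycles $(a,b), (c,d) \in \Cyc(\tau)$ with $a < b < c < d$, then deleting every other 2-cycle via relation (1) of Definition~\ref{def:I-avoidance} and every fixed point via relation (2) produces the $\fkI'$-pattern $2143$. Hence $\fkI'$-avoiding $2143$ already forbids separated pairs of 2-cycles outright, forcing the $21*43$-parity condition in Theorem~\ref{t:mcgovern-I} to be vacuous. This explains why, in passing from the rational smoothness characterization to the smoothness characterization, the auxiliary parity condition collapses into the single additional basis element $2143$, with $1324$ adjoined to capture the remaining smoothness obstruction (the nested fixed-point-around-a-2-cycle configuration).

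The main obstacle is not algorithmic but preparatory: putting McGovern's smoothness criterion into a form amenable to this approach, since the companion results used in Theorems~\ref{t:mcgovern-F} and~\ref{t:mcgovern-I} are originally stated in mixed forms involving $\fkF$-, $\fkI'$-, and $\fkI$-avoidance. Once an equivalent classical pattern-avoidance description of the smooth locus is in hand, the finite enumeration of the $\fkI'$-basis is routine, proceeding by the same computer verification that produced Table~\ref{tab:bases-algorithm}.
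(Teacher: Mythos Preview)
This theorem is not proved in the paper at all: it is a \emph{cited} result of McGovern, as the attribution \texttt{[Theorem 2]\{mcgovern2020closures\}} indicates. The statement characterizes when the $O(n)$-orbit closure $Y_\tau$ in the flag variety is smooth, and its proof is a geometric argument about singularities of $K$-orbit closures carried out in McGovern's paper, not here. The present paper only \emph{uses} Theorems~\ref{t:mcgovern-F}, \ref{t:mcgovern-I}, and \ref{t:mcgovern-I2} as black boxes in order to prove Corollary~\ref{c:mcgovern}.

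Your proposal does not prove the theorem. What you describe is (roughly) the argument for Corollary~\ref{c:mcgovern}, and even then the direction is inverted. You assume that McGovern's smoothness criterion is available in classical pattern-avoidance form and then compute its $\fkI'$-basis; but McGovern's original statement is already in $\fkI'$-avoidance form, and Corollary~\ref{c:mcgovern} goes the other way, showing $\fkI'(\Pi\cup\{2143,1324\}) = \fkI_\fkS(\Pi\cup\{2143,1324\})$ via the finite check licensed by Theorem~\ref{t:basis} and Corollary~\ref{c:I'-avoidance}. In any case, no amount of basis computation establishes the equivalence ``$Y_\tau$ smooth $\iff$ $\tau$ lies in a certain avoidance class''; that equivalence is the geometric input, and your proposal simply assumes it.
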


We prove McGovern's $\fkI'$ and $\fkF$-avoidance characterizations in Theorem~\ref{t:mcgovern-F} and Theorem~\ref{t:mcgovern-I2} hold when using ordinary pattern avoidance.
\begin{corollary}
\label{c:mcgovern}
Let $\Pi$ be as in Theorem~\ref{t:mcgovern-I} and $\Pi'$ as in Theorem~\ref{t:mcgovern-F}.
Then 
\begin{enumerate}
	\item $\fkI'(\Pi \cup \{2143,1324\}) = \fkI(\Pi \cup \{2143,1324\}) = \fkI_\fkS(\Pi \cup \{2143,1324\})$.
	\item $\fkF(\Pi') =  \fkF_\fkS(\Pi')$.
\end{enumerate}
\end{corollary}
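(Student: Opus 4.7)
The plan is to reduce each equality to a finite check by means of Theorem~\ref{t:basis} and Corollary~\ref{c:I'-avoidance}. As a preliminary observation, each of the relations $(1)$, $(2)$, $(3)$ in Definition~\ref{def:I-avoidance} replaces a permutation by the standardization of a subsequence, so $\fkI$-, $\fkI'$-, and $\fkF$-containment all imply ordinary containment; moreover, $\fkI'$-containment is by definition a restriction of $\fkI$-containment. Contrapositively, we obtain for free the inclusions
\[
\fkI_\fkS(\Pi\cup\{2143,1324\}) \subseteq \fkI(\Pi\cup\{2143,1324\}) \subseteq \fkI'(\Pi\cup\{2143,1324\})
\]
and $\fkF_\fkS(\Pi') \subseteq \fkF(\Pi')$, so only the reverse inclusions require argument.

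For part $(2)$, Theorem~\ref{t:basis} applied to $\Pi' \subseteq \bigsqcup_{k=1}^{8}\fkS_k$ forces the $\fkF$-basis $B$ of $\fkF_\fkS(\Pi')$ to lie in $\bigsqcup_{m=1}^{8}\fkF_{2m}$; in particular, every element of $B$ has size at most $16$. Implementing the algorithm described after Theorem~\ref{t:basis}, we enumerate these fixed-point-free involutions and retain those that are $\fkF$-minimal subject to ordinarily containing some pattern in $\Pi'$. The proof then amounts to verifying $B = \Pi'$, whence $\fkF_\fkS(\Pi') = \fkF(B) = \fkF(\Pi')$.

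For part $(1)$ we apply the same strategy with Corollary~\ref{c:I'-avoidance} in place of Theorem~\ref{t:basis}. Since the maximum size of a pattern in $\Pi\cup\{2143,1324\}$ is $8$, the $\fkI'$-basis $B'$ of $\fkI_\fkS(\Pi\cup\{2143,1324\})$ lies in $\bigsqcup_{m=1}^{16}\fkI_m$. Verifying $B' = \Pi\cup\{2143,1324\}$ yields the outer equality $\fkI'(\Pi\cup\{2143,1324\}) = \fkI_\fkS(\Pi\cup\{2143,1324\})$, and the middle set $\fkI(\Pi\cup\{2143,1324\})$ is then sandwiched between two equal sets.

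The main obstacle is the finite verification itself: one must enumerate all (fixed-point-free) involutions of size up to $16$ --- of order $15!!$ for part $(2)$, and considerably more for part $(1)$ --- and test each for $\fkF$- or $\fkI'$-minimality. This is the same kind of computation that produced Table~\ref{tab:bases-algorithm}, so we would reuse the existing code. Before running the full enumeration, however, we would try to streamline the check by arguing directly that no involution strictly $\fkF$- (respectively $\fkI'$-)contained in an element of $\Pi'$ (respectively $\Pi\cup\{2143,1324\}$) ordinarily contains any pattern in that set. Such a structural observation would reduce the main computation to confirming that the listed patterns exhaust the set of minimal witnesses of non-avoidance.
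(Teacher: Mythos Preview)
Your proposal is correct and follows essentially the same approach as the paper: invoke Theorem~\ref{t:basis} and Corollary~\ref{c:I'-avoidance} to bound the relevant bases by size $16$, then carry out a finite computer verification. Your write-up adds useful detail the paper omits, namely the free inclusions coming from the implication ``$\fkI'$-containment $\Rightarrow$ $\fkI$-containment $\Rightarrow$ ordinary containment'' and the sandwiching that handles the middle equality in part~(1); the paper simply asserts that the finite check suffices and reports having done it.
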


\begin{proof}
By Theorem~\ref{t:basis} and Corollary~\ref{c:I'-avoidance}, the results will follow by checking equality up to $\fkI_{16}$.
We have used a computer to do so.
\end{proof}

The results in Corollary~\ref{c:mcgovern} were presented as conjectures at~\cite{mcgovern2019sectional}.
An analogous conjecture for Theorem~\ref{t:mcgovern-I} appears as Conjecture 4 in~\cite{mcgovern2019closures}, but our methods do not apply since $\fkI(21*43)$ is not an $\fkI$-class.
However, it seems plausible that our proof strategy for Theorem~\ref{t:basis} can be adapted to the ambient set $\fkI(21*43)$.

\section{Enumerative properties of $\fkF$- and $\fkI$-avoidance}
\label{sec:enumeration}

\subsection{Enumeration of $\fkI(132)$ and $\fkI(213)$}
First, we show both sets have the same enumeration using the reverse-complement symmetry.
Let $w_0 = n \dots 1 \in \fkS_n$.
The \textbf{reverse-complement} of $\pi = \pi_1 \dots \pi_n \in \fkS_n$ is $w_0 \cdot \pi \cdot w_0 = n{+}1{-}\pi_n \dots n{+}1{-}\pi_1$.
Implicitly, we require that $w_0$ and $\pi$ are members of the same symmetric group.
Since conjugation preserves cycle type, we see reverse-complement restricts to maps $\fkI_n \to \fkI_n$ and $\fkF_{2n} \to \fkF_{2n}$.
For $\Pi \subseteq \fkS$, let $w_0 \Pi w_0 = \{w_0 \pi w_0 : \pi \in \Pi\}$.
The following lemma is obvious from the definitions.

\begin{lemma}
\label{l:reverse-complement}
For $\Pi \subseteq \fkI$ and $\Pi' \subseteq \fkF$, we have
\[
w_0\fkI(\Pi)w_0 = \fkI(w_0\Pi w_0) \quad \mbox{and} \quad w_0 \fkF(\Pi') w_0 = \fkF(w_0 \Pi' w_0).
\]
\end{lemma}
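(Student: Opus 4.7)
The plan is to prove the stronger statement that $\tau \mapsto w_0 \tau w_0$ is an automorphism of the $\fkI$-containment poset whose restriction to $\fkF$ is an automorphism of the $\fkF$-containment poset. Given this, both equalities follow immediately: $\tau \in \fkI(\Pi)$ iff $\tau$ $\fkI$-avoids every element of $\Pi$, iff $w_0 \tau w_0$ $\fkI$-avoids every element of $w_0 \Pi w_0$, iff $w_0 \tau w_0 \in \fkI(w_0 \Pi w_0)$; conjugating both sides by $w_0$ and using $w_0^2 = \id$ then gives the claimed set equality, and the $\fkF$ statement is identical.

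First I would record two preliminaries. Reverse-complement is conjugation by $w_0$ (since $w_0 = w_0^{-1}$), so it preserves cycle type; in particular it sends $\fkI_n$ to $\fkI_n$ and $\fkF_n$ to $\fkF_n$, and it sends cycles $(i,j)$ to $(n{+}1{-}i,\,n{+}1{-}j)$ and fixed points $i$ to fixed points $n{+}1{-}i$. Second, standardization commutes with reverse-complement in the following sense: for $\pi \in \fkS_n$ and $I \subseteq [n]$, writing $I^* = \{n{+}1{-}i : i \in I\}$, one has $\st\bigl((w_0 \pi w_0)|_{I^*}\bigr) = w_0 \cdot \st(\pi|_I) \cdot w_0$, where each $w_0$ lives in the symmetric group of the appropriate size. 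This is a direct check from the definitions of standardization and reverse-complement.

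The core step is to verify that each of the three cover relations defining $\fkI$-containment in Definition~\ref{def:I-avoidance} commutes with reverse-complement. Using the first preliminary, deleting the 2-cycle $(i,j)$ from $\tau$ corresponds to deleting positions $\{i,j\}$, and this matches deleting the 2-cycle $(n{+}1{-}j,\,n{+}1{-}i)$ from $w_0 \tau w_0$, which is deleting positions $I^*$ with $I = \{i,j\}$; likewise for deleting a fixed point. For relation (3), an adjacent 2-cycle $(i,i{+}1)$ in $\tau$ maps to the adjacent 2-cycle $(n{-}i,\,n{-}i{+}1)$ in $w_0 \tau w_0$, so deleting one of its entries is again an instance of relation (3). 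Combined with the second preliminary, this shows that $\rho$ is a cover of $\tau$ in $\fkI$-containment iff $w_0 \rho w_0$ is a cover of $w_0 \tau w_0$.

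Taking transitive closure yields the $\fkI$ statement, and the $\fkF$ statement follows from the same argument restricted to relation (1), which stays within $\fkF$. There is no real obstacle; the only bookkeeping of substance is the compatibility of standardization with reverse-complement, which is essentially formal.
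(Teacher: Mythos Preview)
Your proof is correct and is precisely the unpacking of what the paper intends: the paper states only that the lemma ``is obvious from the definitions'' and gives no further argument, while you have written out the natural symmetry argument (reverse-complement preserves cycle type and commutes with standardization, hence is an automorphism of $\fkI$-containment). There is no meaningful difference in approach; you have simply supplied the details the paper omits.
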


For $\pi \in \fkS$, let  $C_k(\pi)$ be the number of $k$-cycles in $\pi$.
The \textbf{left} and \textbf{right endpoints} of a $2$-cycle $(a,b)$ are $a$ and $b$, respectively.

We now state and prove a $t$-analogue of Theorem~\ref{t:I_3-avoidance}~(b).

\begin{proposition}
\label{p:132}
\[\sum_{\tau \in \fkI_n(213)}  t^{C_2(\tau)} =  \sum_{\tau \in \fkI_n(132)}  t^{C_2(\tau)}  = \sum_{k=0}^{\lfloor n/2 \rfloor} \binom{n-k}{k} k! \,t^k  .\]
\end{proposition}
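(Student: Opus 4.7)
First I would dispatch the equality between the $213$- and $132$-avoiding distributions using Lemma~\ref{l:reverse-complement}. Since the reverse-complement of $132 \in \fkS_3$ is $213$, the map $\tau \mapsto w_0 \tau w_0$ is a bijection $\fkI_n(132) \to \fkI_n(213)$; as this map is conjugation inside $\fkS_n$, it preserves cycle type, and in particular the statistic $C_2$. This handles the first equality.

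For the main identity I would argue by induction on $n$, writing $a_n(t) := \sum_{\tau \in \fkI_n(132)} t^{C_2(\tau)}$ and decomposing $\tau \in \fkI_n(132)$ by the position $p := \tau^{-1}(n)$ of the largest entry. The case $p = n$ simply gives a fixed point at $n$ with $\tau|_{[n-1]} \in \fkI_{n-1}(132)$, contributing $a_{n-1}(t)$. The case $p < n$ makes $(p,n)$ a 2-cycle, and applying 132-avoidance to triples $(i, p, j)$ with $i < p < j$ forces every value at a position before $p$ to exceed every value at a position after $p$. Combined with the involution constraint $\tau(n) = p$, this forces $p \leq n/2$ and a ``sandwich'' structure: positions $\{1, \ldots, p-1\}$ are paired with positions $\{n-p+1, \ldots, n-1\}$ via some $\sigma \in \fkS_{p-1}(132)$ (contributing $p-1$ further 2-cycles beyond $(p,n)$), while the middle block $\{p+1, \ldots, n-p\}$ carries an arbitrary element of $\fkI_{n-2p}(132)$.

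The key step is a short case analysis verifying that this decomposition is a bijection, i.e., that $\tau$ is 132-avoiding if and only if $\sigma$ and $\tau|_{\{p+1,\ldots,n-p\}}$ are; the point is that the value ranges of the five blocks are totally ordered, so the middle-valued ``peak'' of a 132 pattern cannot straddle distinct blocks. Granting this, I would obtain the recurrence
\[
a_n(t) \;=\; a_{n-1}(t) \,+\, \sum_{p=1}^{\lfloor n/2 \rfloor} |\fkS_{p-1}(132)|\, t^{\,p}\, a_{n-2p}(t),
\]
with $|\fkS_{p-1}(132)|$ the Catalan number, and conclude by an induction on $n$ verifying that the claimed closed form $\sum_{k} \binom{n-k}{k}\, k!\, t^k$ satisfies this same recurrence with $a_0(t) = 1$. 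The main obstacle I anticipate is the cross-block case analysis establishing the sandwich bijection; matching the recurrence to the closed form should be a routine generating-function manipulation.
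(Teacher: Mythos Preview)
Your argument has a genuine gap: you are reasoning about ordinary pattern avoidance, but $\fkI(132)$ refers to $\fkI$-avoidance in the sense of Definition~\ref{def:I-avoidance}, and these notions do not coincide here. Indeed, by Table~\ref{tab:bases-algorithm} the $\fkI$-basis of $\fkI_\fkS(132)$ is $\{132,\,35142,\,465132\}$, so $\fkI(132)\supsetneq\fkI_\fkS(132)$. Concretely, $\tau=35142=(13)(25)(4)\in\fkI_5(132)$ (it $\fkI$-avoids $132$), yet your sandwich structure fails for it: the largest value $n=5$ sits at position $p=2$, so you would require position $1$ to be paired with position $4$, but in fact $(1,3)$ is a $2$-cycle and $4$ is a fixed point. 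The step ``applying $132$-avoidance to triples $(i,p,j)$ forces every value before $p$ to exceed every value after $p$'' is exactly where ordinary avoidance is being used illegitimately; $\tau(1)=3<4=\tau(4)$ in the example above.

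This is not merely a presentational issue: your recurrence $a_n=a_{n-1}+\sum_{p\ge1}C_{p-1}\,a_{n-2p}$ gives $a_5=6+3+1=10$, whereas the proposition asserts $|\fkI_5(132)|=\sum_k\binom{5-k}{k}k!=1+4+6=11$; the value $10$ is instead $|\fkI_{\fkS_5}(132)|=\binom{5}{2}$. The paper's proof proceeds by a different, direct structural characterization of $\fkI(132)$: the $2$-cycles of $\tau$ must standardize to an element of $\fkF_{2k}(2143)$ (a permutational matching, giving $k!$ choices), and every fixed point must lie to the right of every left endpoint of a $2$-cycle (giving $\binom{n-k}{k}$ placements). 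To repair your approach you would need to replace the ordinary-avoidance sandwich with this $\fkI$-avoidance characterization.
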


\begin{proof}
	Since reverse-complement preserves cycle type, the first equality follows from Lemma~\ref{l:reverse-complement}.
	To prove the second equality, we characterize $\tau \in \fkI(132)$.
	Let $\rho \in \fkF$ be the standardization of 2-cycles in $\tau$.
	Since $2143$ $\fkI$-contains $132$, we see $\rho \in \fkF(2143)$.
	Moreover, if $(i,j)$ is a 2-cycle in $\tau$ then every fixed point of $\tau$ must be greater than $i$.
	However, $\tau$ can have fixed points at any positions after the last left endpoint of $\rho$.
	Let $C_2(\tau) = k$.
	To construct $\tau$, we must choose $\rho \in \fkF_{2k}(2143)$ and place the $n-2k$ fixed points among the $k$ right endpoints in $\rho$.
	There are $k!$ choices for $\rho$ and $\binom{n-k}{k}$ ways to place the fixed points.
	Summing over $k$, the result follows.
\end{proof}
Theorem~\ref{t:I_3-avoidance}~(b) follows by setting $t=1$.
The weighted ordinary generating function for $\fkI(132)$ appears in recent work by Pan and Zeng~\cite{pan2019branden}.
Let
\[
[k]_{p,q} = \frac{p^k - q^k}{p-q} = \sum_{i=0}^{k-1} p^i q^{k-1-i} \quad \mbox{and} \quad \binom{n}{k}_{p,q} = \frac{[n]_{p,q} \dots [n-k+1]_{p,q}}{[k]_{p,q} \dots [1]_{p,q}} 
\]
and define $D_{n}(p,q,t)$ via the generating function
\begin{align}\label{pq-shift}
\sum_{n=0}^\infty D_{n+1}(p,q,t)x^n
=\cfrac{1}{1-x-\cfrac{\binom{2}{2}_{p,q} t\,x^2}{1-[2]_{p,q} x-
\cfrac{\binom{3}{2}_{p,q}t\,x^2}{1-[3]_{p,q} x-\cfrac{\binom{4}{2}_{p,q}t\,x^2}{1-[4]_{p,q} x-\cfrac{\binom{5}{2}_{p,q} t\,x^2}{1-\cdots}}}}}\ .
\end{align}
The $t = 1, p=1$ case is studied by~\cite{han2020hankel}.
\begin{corollary}
	\[\sum_{\tau \in \fkI_n(213)}  t^{C_2(\tau)} =  \sum_{\tau \in \fkI_n(132)}  t^{C_2(\tau)} = D_{n}(1,-1,t).
	\]
\end{corollary}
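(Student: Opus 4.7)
The plan is to combine Proposition~\ref{p:132} with Pan and Zeng's combinatorial model for $D_n(p,q,t)$ in terms of André paths~\cite{pan2019branden}. Proposition~\ref{p:132} already supplies the first equality, so the corollary reduces to the identity
\[
\sum_{k=0}^{\lfloor n/2 \rfloor} \binom{n-k}{k}\, k!\, t^k = D_n(1,-1,t).
\]
At $(p,q)=(1,-1)$ the ingredients of the continued fraction~\eqref{pq-shift} simplify: $[k]_{1,-1}$ is $1$ or $0$ according to the parity of $k$, and $\binom{k}{2}_{1,-1} = \lfloor k/2 \rfloor$. So the right-hand side is a concretely computable polynomial in $t$, and in principle a direct continued-fraction manipulation would suffice.

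The approach I would actually pursue is bijective. I would construct a size- and statistic-preserving bijection $\Phi \colon \fkI_n(132) \to \AP_n$ between $132$-avoiding involutions of size $n$ and André paths of length $n$, in which $C_2(\tau)$ is carried to the statistic tracked by $t$ in the Pan--Zeng interpretation of $D_n(p,q,t)$. Given such a $\Phi$, the corollary is immediate from their theorem that this weighted enumeration of André paths equals $D_n(1,-1,t)$. The construction of $\Phi$ leverages the structural description uncovered in the proof of Proposition~\ref{p:132}: a $132$-avoiding involution $\tau$ is specified by a permutational matching $\rho \in \fkF_{2k}(2143)$, equivalently a permutation $\sigma \in \fkS_k$, together with an interleaving of the $n-2k$ fixed points among the $k$ right endpoints of $\rho$. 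I would translate this data into a Motzkin path of length $n$ whose up-steps occur at the left endpoints of $\rho$, whose down-steps occur at the right endpoints, and whose horizontal steps correspond to fixed points, with the labels on the down-steps read off from $\sigma$ by the natural rule suggested by the non-crossing/non-nesting structure of $\rho$.

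The main obstacle is to identify the correct labeling rule so that the resulting labeled Motzkin path satisfies the defining ``André'' condition of~\cite{pan2019branden}, and to verify that $\Phi$ is a bijection onto $\AP_n$ with an explicit inverse. This is a careful but essentially routine unpacking of the definitions; once the labeling rule is fixed, compatibility with the statistic is automatic since each $2$-cycle of $\tau$ contributes exactly one up-step to $\Phi(\tau)$, and so the power of $t$ in the weighted sum over $\AP_n$ matches $C_2(\tau)$ term by term.
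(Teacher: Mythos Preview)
Your plan does more than the paper's proof requires, and the extra work you sketch has a real gap.

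The paper's proof of this corollary is two citations: Proposition~\ref{p:132} supplies the polynomial $\sum_k \binom{n-k}{k}k!\,t^k$, and \cite[Theorem~6]{pan2019branden} identifies that polynomial with $D_n(1,-1,t)$. No bijection is constructed here. The bijection $\omega:\fkI_n(132)\to\AP_n$ you outline is the content of a \emph{separate} proposition that the paper proves afterward, independently of this corollary.

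More importantly, the bijection you describe does not produce Andr\'e paths. Recall from the proof of Proposition~\ref{p:132} that in $\tau\in\fkI_n(132)$ every left endpoint precedes every fixed point (and $\rho\in\fkF(2143)$ forces every left endpoint to precede every right endpoint). So your rule ``up-steps at left endpoints, down-steps at right endpoints, level steps at fixed points'' yields a path of the form $U^k$ followed by an arbitrary shuffle of $L$'s and $D$'s; the level steps then sit at heights $k,k-1,k-2,\ldots$, which are certainly not all even. The obstacle is not merely the labeling rule but the underlying Motzkin shape. The paper's actual construction of $\omega$ does not read the path off the matching at all: it sends $\sigma\in\fkS_k$ through the Fran\c con--Viennot bijection to a labeled Laguerre history, converts that to a labeled Dyck path via the map $\phi$ of Lemma~\ref{l:dyck-path}, and only then inserts level steps at the even-height positions $M_{2i}$ using the composition data (Lemma attributed to \cite[Lemma~4]{pan2019branden}). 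The Fran\c con--Viennot step is an essential ingredient that your sketch does not anticipate.
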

\begin{proof}
	The result follows from Proposition~\ref{p:132} and~\cite[Theorem 6]{pan2019branden}.
\end{proof}

Since $D_5(p,-1,t) = 1 + (p^2 +2)t + (p^2 - p +2)t^2$ has a negative term, we do not expect a direct 
combinatorial interpretation of $D_n(p,-1,t)$.
Using generating functions, Han proved the recurrence~\cite[Equation~(7.7)]{han2020hankel}
\begin{equation}
\label{eq:recurrence}
2|\fkI_n(132)| = 3 |\fkI_{n-1}(132)| + (n-1)|\fkI_{n-2}(132)| - (n-1)|\fkI_{n-3}(132)|.
\end{equation}
It is an interesting open problem to give a combinatorial proof of Equation~\eqref{eq:recurrence}.

Pan and Zeng give another combinatorial interpretation of $D_n(1,-1,t)$ in terms of objects they call Andr\'e paths.
Recall, a \textbf{Motzkin path} of \textbf{length} $n$ is a function $M: [n] \to \{U,D,L\}$ where the \textbf{height} $h_i(M) = |M^{-1}(U) \cap [i]| - |M^{-1}(D) \cap [i]|$ of the $i$th step is always non-negative for all $i \in [n]$.
Equivalently, we can write $M$ as the word $M_1 \dots M_n$ where $M_i = M(i)$, and depict $M$ by drawing $U$ as an up step, $L$ as a level step and $D$ as a down step.
An \textbf{Andr\'e path} is a Motzkin path where all level steps have even height and each down step $M_i$ is labeled with an integer between $1$ and $\lceil h_i(M)/2 \rceil$.
Let $\AP_n$ be the set of Andr\'e paths of length $n$ (see Figure~\ref{f:andre}).

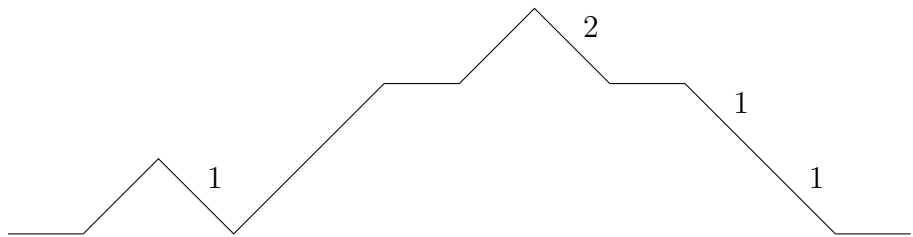
\begin{figure}
	\[\begin{tikzpicture}
	\draw (0,0) -- (1,0) -- (2,1) -- (3,0) -- (4,1) -- (5,2) -- (6,2) -- (7,3) -- (8,2) -- (9,2) -- (10,1) -- (11,0) -- (12,0);
	\node at (2.75,.75) {1};
	\node at (7.75,2.75) {2};
	\node at (10.75,.75) {1};
	\node at (9.75,1.75) {1};
\end{tikzpicture}\]
\caption{An Andr\'e path with underlying Motzkin path $LUDUULUDLDDL$. 
The only other Andr\'e path with the same Motzkin path has a 1 in place of the 2. \label{f:andre}}
\end{figure}

We exhibit an explicit bijection between Andr\'e paths and $\fkI(132)$.
An equivalent map can also be derived from the proof of~\cite[Theorem 6]{pan2020combinatorics}.
Recall a \textbf{Dyck path} is a Motzkin path with no $L$'s. Let $\DP_n$ represent the set of Dyck paths with half-length $n$ and $\MP_n$ represent the set of Motzkin paths with length $n$.
We start with a classical bijection due to Fran\c{c}on and Viennot~\cite[Theorem 5]{francon78histoires} between permutations and labeled \textbf{Laguerre histories}, which are Motzkin paths with two types of level steps denoted $L'$ and $L''$.
Let $\LH_n$ denote the set of Laguerre histories $H = H_1 \dots H_n$ with labeling $\lambda$ where
	$\lambda(H_j) \in  [h_j(H)+1]$.

\begin{lemma}[Fran\c{c}on--Viennot]
\label{lem:francon-viennot}
	There is a bijection $\chi: \fkS_{n+1} \rightarrow \LH_n$.
\end{lemma}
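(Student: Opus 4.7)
The plan is to construct $\chi$ explicitly via a local classification procedure, define its inverse by an insertion algorithm, and verify that the two maps compose to the identity by induction.

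For the forward direction, given $\pi = \pi_1 \cdots \pi_{n+1} \in \fkS_{n+1}$, the classical construction proceeds by examining each value $v \in [n]$ and comparing the position of $v$ in $\pi$ with the positions of $v-1$ and $v+1$, using standard sentinel conventions at the extremes. This classifies $v$ as a \emph{peak}, \emph{valley}, \emph{double ascent}, or \emph{double descent}, which correspond respectively to step types $U$, $D$, $L'$, $L''$ in a Motzkin-type word of length $n$; I would take these as the steps $H_1, \ldots, H_n$ of the underlying Laguerre history. The label $\lambda(H_v)$ is then defined as a positional rank of $v$ among a list of ``active'' values determined by the partial information collected up to step $v$. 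A key observation is that the height $h_v(H)$ equals the number of maximal decreasing runs of $\pi|_{\{1,\ldots,v\}}$ that have not yet been closed, which is manifestly non-negative, and the label lies in $[h_v(H)+1]$ because there are exactly that many candidate slots at each step.

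For the inverse, I would build $\pi$ by inserting the values $1, 2, \ldots, n+1$ one at a time into a growing partial permutation, using the step type $H_v$ to decide whether to open a new decreasing run, close an existing one, or extend one (with $L'$ and $L''$ distinguishing the two ways to extend), and using the label $\lambda(H_v)$ to select among the $h_v(H)+1$ possible insertion slots. The main obstacle is verifying that these two procedures are mutually inverse. I would establish this by induction on $n$: the base case is trivial, and the inductive step amounts to checking that removing the largest value from $\pi$ corresponds exactly to stripping off the last step of $H$, at which point the local classification of the largest value is determined by, and determines, the last step together with its label. Bijectivity then follows, or can alternatively be confirmed by the cardinality identity $(n+1)! = \sum_{M \in \MP_n} \prod_{j=1}^{n} (h_j(M)+1) = |\LH_n|$, which makes injectivity and surjectivity equivalent.
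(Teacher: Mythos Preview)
Your proposal and the paper's description are both the classical Fran\c{c}on--Viennot bijection, so the approaches are essentially the same. The paper, however, factors the map explicitly through increasing binary trees: it sends $\pi \in \fkS_{n+1}$ to its decreasing binary tree $T_\pi$ (root at the minimum, recurse on the two sides), and then reads off the Laguerre history by recording, for each vertex $i$, whether it has children $\{\ell,r\}$, $\{\ell\}$, $\{r\}$, or $\varnothing$ (giving $U$, $L'$, $L''$, $D$ respectively) together with the slot index at the moment of insertion. Your ``peak/valley/double ascent/double descent'' classification is exactly the same data, since in the tree a vertex with two children is a local minimum (valley) and a leaf is a local maximum (peak); note that this means your stated correspondence is inverted --- valleys should map to $U$ (they open two slots) and peaks to $D$ (they close one), not the other way around. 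With that fix, your insertion-based inverse is precisely the paper's tree-growing procedure read in permutation language, so the two constructions coincide.
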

We give a brief description of the Fran\c con--Viennot bijection.
Recall a binary tree is a rooted tree in which each vertex has no children, a right child, a left child, or a left child and a right child.
We denote the four cases by $\varnothing, \{r\}, \{\ell\}$ and $\{\ell,r\}$, respectively.
Given a word with distinct real entries $w = w_1 \dots w_n$, let $i$ be the index so that $w_i$ is minimal in $w$ and define the tree $T_w$ recursively with root $i$ whose left and right childen are the roots of $T_{w_1 \dots w_{i-1}}$ and $T_{w_{i+1} \dots w_n}$, respectively.
This procedure produces a bijection from $\fkS_n$ to  $\cT_n$, the set of binary trees with vertex labels $1,2,\ldots n$ so that the label of each vertex is less than its children.

There is also a bijection from $\LH_{n-1}$ to $\cT_n$.
Given $(H = H_1 \dots H_{n-1},\lambda) \in \LH_{n-1}$, we construct a tree by inserting the vertices $1,2,\ldots n$ successively.
Begin with an empty tree at the 0th step that has one available position to fill.
For $i \in [n-1]$, the set of children for the $i$th vertex will be
\[
\begin{array}{ll}
\{\ell,r\} & \text{if $H_i = U$},\\
\{\ell\} & \text{if $H_i = L'$}, \\
\{r\} & \text{if $H_i = L''$}, \\
\varnothing & \text{if $H_i = D$}.
\end{array}
\]
At step $i$, the number of possible positions to insert the $i$th vertex is $1+h_i(H)$.
We insert the vertex $i$ at the $\lambda(H_i)$th position starting from the left. Finally we insert the $n$th vertex in the only available position remaining.
This map is invertible, and $\chi$ is the composition $\sigma \mapsto T \mapsto (H,\lambda)$ where $T$ is the intermediate binary tree.

Let $\DP_{n+1}'$ be the set of labeled Dyck paths $(M,\mu)$ with $\mu(M_i) = 1$ if $M_i = U$ and $1 \leq \mu(M_i) \leq \lceil h_i(D)/2 \rceil$ if $M_i = D$.
\begin{lemma}
\label{l:dyck-path}
	There is a bijection $\phi: \DP_{n+1}' \rightarrow \LH_n $.
\end{lemma}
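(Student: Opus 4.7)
The plan is to construct $\phi$ via a canonical pairing of consecutive steps of the Dyck path. Since $M \in \DP_{n+1}$ has length $2n+2$ and starts and ends at height $0$, the first step $M_1$ must be $U$ and the last step $M_{2n+2}$ must be $D$. The $2n$ interior steps then pair naturally as $(M_{2j}, M_{2j+1})$ for $j = 1, \dots, n$, and each pair is one of $UU$, $UD$, $DU$, $DD$. I translate them as $UU \mapsto U$, $UD \mapsto L'$, $DU \mapsto L''$, $DD \mapsto D$ to obtain a word $H$ of length $n$. Setting $h_j(H) = \tfrac{1}{2}(h_{2j+1}(M) - 1)$ makes $H$ a Motzkin path of length $n$ ending at height $0$; the Dyck non-negativity $h_i(M) \ge 0$ translates exactly into the Motzkin constraints on $H$, with $DD$ pairs (the $D$ steps of $H$) appearing precisely when $h_{j-1}(H) \ge 1$.

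Next I transfer the labels $\mu$ on down-steps of $M$ to labels $\lambda$ on $H$. The $L'$ and $L''$ cases are local: the unique $D$ in a $UD$ pair at position $j$ has starting height $2h_{j-1}(H) + 2$, so $\mu$ lies in $[h_{j-1}(H) + 1]$, which matches the $L'$ label range $[h_j(H)+1]$ exactly; copy it to $\lambda(H_j)$, and do the symmetric thing for $DU/L''$. For the $U$ and $D$ steps of $H$, the pair-local label counts do not match, so I use the standard non-crossing matching of $U$'s with $D$'s inside $H$: if $H_j = U$ is matched with $H_{j'} = D$, then the associated $DD$ pair of $M$ at positions $(2j', 2j'+1)$ contributes two down-step labels. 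Assign the first of these to $\lambda(H_j)$ and the second to $\lambda(H_{j'})$.

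The main obstacle is confirming that every label lands in the correct target range. This reduces to two short arithmetic checks using $h_j(H) = \tfrac{1}{2}(h_{2j+1}(M) - 1)$: the $L'/L''$ check is immediate, and for an arch-matched $U/D$ pair the identity $h_{j'-1}(H) = h_{j-1}(H) + 1$ gives that the first $D$-label of the $DD$ pair lies in $[h_{j-1}(H) + 2] = [h_j(H) + 1]$ and the second in $[h_{j-1}(H) + 1] = [h_{j'}(H) + 1]$, as needed. Invertibility is then automatic: given $(H, \lambda) \in \LH_n$, expand each step of $H$ back into its Dyck pair, prepend $U$ and append $D$ to recover $M$, and redistribute $\lambda$ onto the $D$'s of $M$ by undoing the arch-matching recipe. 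Every step of the construction is uniquely determined, so $\phi$ is a bijection.
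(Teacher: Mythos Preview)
Your construction is correct and is essentially the same bijection as the paper's: both strip off $M_1=U$ and $M_{2n+2}=D$, group the interior as $(M_{2j},M_{2j+1})$, translate $UU,UD,DU,DD$ to $U,L',L'',D$, and transfer the two down-step labels of each $DD$ block to the arch-matched $U$/$D$ pair in $H$. The only difference is that you send the first $D$-label to the matched $U$ and the second to the $D$, whereas the paper does the reverse; this is an immaterial convention choice (and your range-check arithmetic is in fact cleaner than the paper's somewhat typo-laden version).
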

\begin{proof}
	Let $(M = M_1\ldots M_{2n+2},\mu) \in \DP'_{n+1}$.
	To define $\phi((M,\mu)) = (H,\lambda)$, let
	\[
	H_i = \begin{cases}
	U & \text{if $M_{2i}M_{2i+1}=UU$};\\
	D & \text{if $M_{2i}M_{2i+1}=DD$};\\
	L' & \text{if $M_{2i}M_{2i+1}=UD$};\\
	L'' & \text{if $M_{2i}M_{2i+1}=DU$}.
	\end{cases}
	\]
	By construction, the height $h_{i}(H) = \frac{j}{2}$ where $j = h_{2i}(M)$.
	Each $U$ in $H$ corresponds to the next $D$ at the same level.
	For $H_i = U$, let $\ell(i)$ be the index so that $H_{\ell(i)} = D$ corresponds to $H_i$.
	We give the resulting path labels according to the weights of the original path:
	\[
	\lambda(L_i)= \begin{cases}
 \mu(M_{2\ell(i)+1}) & \text{if $H_i = U$};\\
 \mu(M_{2i}) & \text{if $H_i = D$};\\
 \mu(M_{2i+1}) & \text{if $H_i = L'$};\\
 \mu(M_{2i}) & \text{if $H_i = L''$}.	
 \end{cases}
	\]
Note this is a label preserving bijection, in the sense that the two labelings have identical outputs when $\mu$ is restricted to down steps in $M$.
\end{proof}

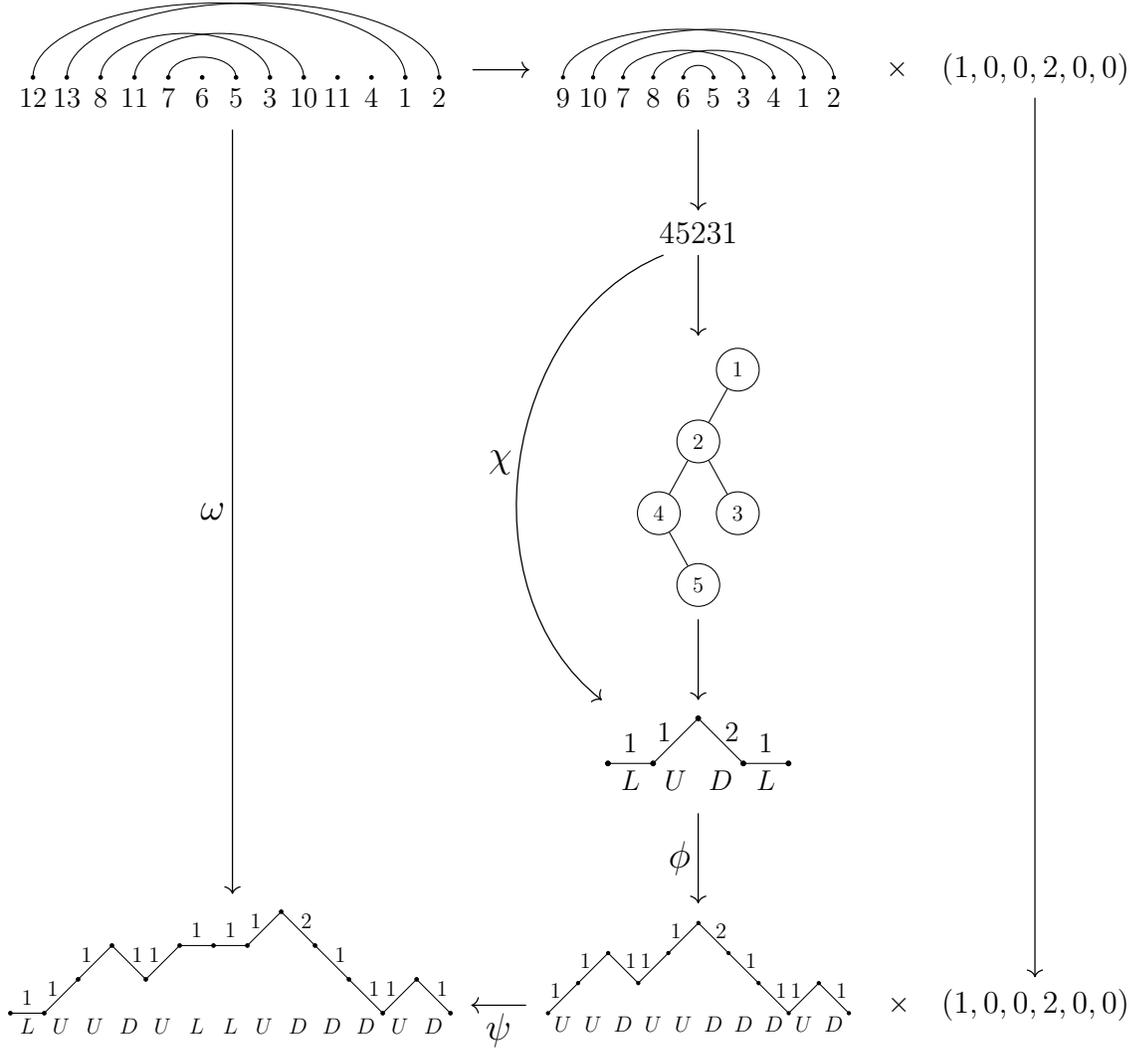
\begin{figure}
	\begin{tikzcd}[row sep=6ex, column sep=4ex]
	\begin{tikzpicture}[scale=.45]
	
	\draw (5.5,0) [partial ellipse=180:0:5.5 and 2.2];
	\draw (6.5,0) [partial ellipse=180:0:5.5 and 2.2];
	\draw (4.5,0) [partial ellipse=180:0:2.5 and 1.3];
	\draw (5.5,0) [partial ellipse=180:0:2.5 and 1.3];
	\draw (5,0) [partial ellipse=180:0:1 and .6];
	\filldraw [black] (0,0) circle (1.5pt) node[anchor=north][scale=.9] {12};
	\filldraw [black] (1,0) circle (1.5pt) node[anchor=north][scale=.9] {13};
	\filldraw [black] (2,0) circle (1.5pt) node[anchor=north][scale=.9] {8};
	\filldraw [black] (3,0) circle (1.5pt) node[anchor=north][scale=.9] {11};
	\filldraw [black] (4,0) circle (1.5pt) node[anchor=north][scale=.9] {7};
	\filldraw [black] (5,0) circle (1.5pt) node[anchor=north][scale=.9] {6};
	\filldraw [black] (6,0) circle (1.5pt) node[anchor=north][scale=.9] {5};
	\filldraw [black] (7,0) circle (1.5pt) node[anchor=north][scale=.9] {3};
	\filldraw [black] (8,0) circle (1.5pt) node[anchor=north][scale=.9] {10};
	\filldraw [black] (9,0) circle (1.5pt) node[anchor=north][scale=.9] {11};
	\filldraw [black] (10,0) circle (1.5pt) node[anchor=north][scale=.9] {4};
	\filldraw [black] (11,0) circle (1.5pt) node[anchor=north][scale=.9] {1};
	\filldraw [black] (12,0) circle (1.5pt) node[anchor=north][scale=.9] {2};
	
	\end{tikzpicture}\arrow[dddd, "\omega"{left}]\arrow[r]&\begin{tikzpicture}[scale=.4]
	
	\draw (4,0) [partial ellipse=180:0:4 and 1.6];
	\draw (5,0) [partial ellipse=180:0:4 and 1.6];
	\draw (4,0) [partial ellipse=180:0:2 and .9];
	\draw (5,0) [partial ellipse=180:0:2 and .9];
	\draw (4.5,0) [partial ellipse=180:0:.5 and .4];
	\filldraw [black] (0,0) circle (1.5pt) node[anchor=north][scale=.9] {9};
	\filldraw [black] (1,0) circle (1.5pt) node[anchor=north][scale=.9] {10};
	\filldraw [black] (2,0) circle (1.5pt) node[anchor=north][scale=.9] {7};
	\filldraw [black] (3,0) circle (1.5pt) node[anchor=north][scale=.9] {8};
	\filldraw [black] (4,0) circle (1.5pt) node[anchor=north][scale=.9] {6};
	\filldraw [black] (5,0) circle (1.5pt) node[anchor=north][scale=.9] {5};
	\filldraw [black] (6,0) circle (1.5pt) node[anchor=north][scale=.9] {3};
	\filldraw [black] (7,0) circle (1.5pt) node[anchor=north][scale=.9] {4};
	\filldraw [black] (8,0) circle (1.5pt) node[anchor=north][scale=.9] {1};
	\filldraw [black] (9,0) circle (1.5pt) node[anchor=north][scale=.9] {2};
	
	\end{tikzpicture}\arrow[r, phantom, "\times"]\arrow[d]&[.25ex](1,0,0,2,0,0)\arrow[dddd]\\
	&45231\arrow[dd, bend right=60, "\chi"{left}]\arrow[d]&\\
	&\begin{tikzpicture}[scale=.7]
	\node[circle,draw][scale=.7](z){1}
	child{
		node[circle,draw][scale=.7]{2} 
		child{
			node[circle,draw][scale=.7] {4}
			child[missing]{}
			child{ node[circle,draw][scale=.7] {5} }
		} 
		child{node[circle,draw][scale=.7] {3}} 
	}
	child[missing]{};
	\end{tikzpicture}\arrow[d]&\\
	&\begin{tikzpicture}[scale=.6]
	\draw (0,0) -- (1,0) -- (2,1) -- (3,0) -- (4,0);
	\filldraw [black] (0,0) circle (1.5pt);
	\filldraw [black] (1,0) circle (1.5pt);
	\filldraw [black] (2,1) circle (1.5pt);
	\filldraw [black] (3,0) circle (1.5pt);
	\filldraw [black] (4,0) circle (1.5pt);
	
	\node[scale=.9] at (.5,.25) {1};
	\node[scale=.9] at (1.25,.5) {1};
	\node[scale=.9] at (2.75,.5) {2};
	\node[scale=.9] at (3.5,.25) {1};
	
	\node[scale=.9] at (.5,-.6) {$L$};
	\node[scale=.9] at (1.5,-.6) {$U$};
	\node[scale=.9] at (2.5,-.6) {$D$};
	\node[scale=.9] at (3.5,-.6) {$L$};
	\end{tikzpicture}\arrow[d, "\phi"{left}]&\\
	\begin{tikzpicture}[scale=.45]
	\draw (-1,0) -- (0,0) -- (1,1) -- (2,2) -- (3,1) -- (4,2) -- (5,2) -- (6,2) -- (7,3) -- (8,2) -- (9,1) -- (10,0) -- (11,1) -- (12,0);
	
	\filldraw [black] (-1,0) circle (1.5pt);
	\filldraw [black] (0,0) circle (1.5pt);
	\filldraw [black] (1,1) circle (1.5pt);
	\filldraw [black] (2,2) circle (1.5pt);
	\filldraw [black] (3,1) circle (1.5pt);
	\filldraw [black] (4,2) circle (1.5pt);
	\filldraw [black] (5,2) circle (1.5pt);
	\filldraw [black] (6,2) circle (1.5pt);
	\filldraw [black] (7,3) circle (1.5pt);
	\filldraw [black] (8,2) circle (1.5pt);
	\filldraw [black] (9,1) circle (1.5pt);
	\filldraw [black] (10,0) circle (1.5pt);
	\filldraw [black] (11,1) circle (1.5pt);
	\filldraw [black] (12,0) circle (1.5pt);
	
	\node[scale=.7] at (-.5,.25) {1};
	\node[scale=.7] at (.25,.5) {1};
	\node[scale=.7] at (1.25,1.5) {1};
	\node[scale=.7] at (2.75,1.5) {1};
	\node[scale=.7] at (3.25,1.5) {1};
	\node[scale=.7] at (4.5,2.25) {1};
	\node[scale=.7] at (5.5,2.25) {1};
	\node[scale=.7] at (6.25,2.5) {1};
	\node[scale=.7] at (7.75,2.5) {2};
	\node[scale=.7] at (8.75,1.5) {1};
	\node[scale=.7] at (9.75,.5) {1};
	\node[scale=.7] at (10.25,.5) {1};
	\node[scale=.7] at (11.75,.5) {1};
	
	\node[scale=.7] at (-.5,-.6) {$L$};
	\node[scale=.7] at (.5,-.6) {$U$};
	\node[scale=.7] at (1.5,-.6) {$U$};
	\node[scale=.7] at (2.5,-.6) {$D$};
	\node[scale=.7] at (3.5,-.6) {$U$};
	\node[scale=.7] at (4.5,-.6) {$L$};
	\node[scale=.7] at (5.5,-.6) {$L$};
	\node[scale=.7] at (6.5,-.6) {$U$};
	\node[scale=.7] at (7.5,-.6) {$D$};
	\node[scale=.7] at (8.5,-.6) {$D$};
	\node[scale=.7] at (9.5,-.6) {$D$};
	\node[scale=.7] at (10.5,-.6) {$U$};
	\node[scale=.7] at (11.5,-.6) {$D$};
	\end{tikzpicture}&\begin{tikzpicture}[scale=.4]
	\draw (0,0) -- (1,1) -- (2,2) -- (3,1) -- (4,2) -- (5,3) -- (6,2) -- (7,1) -- (8,0) -- (9,1) -- (10,0);
	
	\filldraw [black] (0,0) circle (1.5pt);
	\filldraw [black] (1,1) circle (1.5pt);
	\filldraw [black] (2,2) circle (1.5pt);
	\filldraw [black] (3,1) circle (1.5pt);
	\filldraw [black] (4,2) circle (1.5pt);
	\filldraw [black] (5,3) circle (1.5pt);
	\filldraw [black] (6,2) circle (1.5pt);
	\filldraw [black] (7,1) circle (1.5pt);
	\filldraw [black] (8,0) circle (1.5pt);
	\filldraw [black] (9,1) circle (1.5pt);
	\filldraw [black] (10,0) circle (1.5pt);
	
	\node[scale=.7] at (.25,.5) {1};
	\node[scale=.7] at (1.25,1.5) {1};
	\node[scale=.7] at (2.75,1.5) {1};
	\node[scale=.7] at (3.25,1.5) {1};
	\node[scale=.7] at (4.25,2.5) {1};
	\node[scale=.7] at (5.75,2.5) {2};
	\node[scale=.7] at (6.75,1.5) {1};
	\node[scale=.7] at (7.75,.5) {1};
	\node[scale=.7] at (8.25,.5) {1};
	\node[scale=.7] at (9.75,.5) {1};
	
	\node[scale=.7] at (.5,-.6) {$U$};
	\node[scale=.7] at (1.5,-.6) {$U$};
	\node[scale=.7] at (2.5,-.6) {$D$};
	\node[scale=.7] at (3.5,-.6) {$U$};
	\node[scale=.7] at (4.5,-.6) {$U$};
	\node[scale=.7] at (5.5,-.6) {$D$};
	\node[scale=.7] at (6.5,-.6) {$D$};
	\node[scale=.7] at (7.5,-.6) {$D$};
	\node[scale=.7] at (8.5,-.6) {$U$};
	\node[scale=.7] at (9.5,-.6) {$D$};
	\end{tikzpicture}\arrow[l, "\psi"]\arrow[r, phantom, "\times"]&(1,0,0,2,0,0)
	\end{tikzcd}
	\caption{An example of the bijection between Andr\'{e} paths and $\fkI(132)$ using the involution 659421783}
	\label{b:andre}
\end{figure}

Let $\Y_{n,k}$ be the set of weak compositions of $n$ into $k$ parts.
Let $\AP_{n,j}$ be the set of Andr\'e paths with $j$ $L$'s.

\begin{lemma}[{\cite[Lemma 4]{pan2019branden}}]
	There is a bijection 
	$
	\psi:\AP_{n,n-2k}\rightarrow \Y_{n,k}\times \DP_k'.
	$
\end{lemma}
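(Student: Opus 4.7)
The plan is to exhibit an explicit decomposition map, separating any André path into (i) its skeleton of non-level steps (yielding a labeled Dyck path) and (ii) a record of how its level steps are distributed (yielding the weak composition).

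Given $A \in \AP_{n,n-2k}$, I would first delete all of the $L$'s from $A$ to obtain a word $M$ of length $2k$ in the alphabet $\{U,D\}$. Because level steps do not change the height, the height sequence of $M$ at each position is exactly the height of $A$ at the corresponding non-level position; thus $M$ starts and ends at $0$ and is nonnegative throughout, so $M \in \DP_k$. Inheriting the down-step labels from $A$ produces $(M,\mu) \in \DP_k'$, since the relevant heights are preserved and the bound $1 \le \mu(M_i) \le \lceil h_i(M)/2 \rceil$ transfers verbatim.

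Next I would record how the $n-2k$ deleted level steps were distributed among the slots of $M$. The crucial parity observation is that every step of a Dyck path changes the parity of the height, so the $2k+1$ slots (before the first step, between consecutive steps, after the last step) alternate in height-parity as $0, 1, 0, 1, \ldots, 0$, giving exactly $k+1$ slots of even height. Because every $L$ of $A$ must have occurred at an even height, each deleted $L$ lies in one of these $k+1$ distinguished slots, and recording the count in each slot produces the desired weak composition encoding the $L$-placement.

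The inverse map is immediate: given $(M,\mu) \in \DP_k'$ and the weak composition, insert the prescribed number of consecutive $L$'s into each of the $k+1$ even-height slots of $M$ and retain $\mu$ as the labeling. By construction every inserted $L$ sits at an even height, so the resulting Motzkin path lies in $\AP_{n,n-2k}$, and the two operations are manifestly mutually inverse. The only real content is the parity observation about Dyck-path heights, which pins down both the canonical positions where the $L$'s live and the number of such positions; the rest is bookkeeping.
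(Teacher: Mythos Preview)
Your approach is essentially the same as the paper's: both strip the level steps from the Andr\'e path to obtain the labeled Dyck skeleton in $\DP_k'$, invoke the parity observation that $h_i(M)$ is even in a Dyck path if and only if $i$ is even, and record the positions of the deleted $L$'s as a weak composition.

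One point worth flagging: you (correctly) count $k+1$ even-height slots, including the slot before the first step, whereas the paper's proof as written inserts $y_i$ $L$'s only ``after $M_{2i}$'' for $i=1,\dots,k$, giving just $k$ slots. Your count is the right one---the Andr\'e path in Figure~\ref{f:andre} begins with an $L$, and the cardinality $\binom{n-k}{k}$ required in Proposition~\ref{p:132} matches weak compositions of $n-2k$ into $k+1$ (not $k$) parts. So the discrepancy is an off-by-one slip in the paper's description of $\psi^{-1}$ (and in its definition of $\Y_{n,k}$), not a problem with your argument.
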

\begin{proof}
	When $k > \lfloor n/2 \rfloor$, both sets are empty.
	For $0 \leq k \leq \lfloor n/2 \rfloor$, we construct $\psi^{-1}$.
	Let $M \in \DP_k$, and observe $h_i(M)$ is even if and only if $i$ is even.
	Given $(y_1,\dots,y_k) \in \Y_{n,k}$, we extend $M$ to a Motzkin path by adding $y_i$ $L$'s after $M_{2i}$.
	This process is invertible.
	The paths in $\AP_{n,n-2k}$ and $\DP'_k$ then have the same number of $D$'s at the same heights, so the same label is applied to both.
\end{proof}

\begin{proposition}
	There is a bijection $\omega: \fkI_n(132) \rightarrow \AP_{n}$ so that $C_1(\tau)$ is the number of $L$'s in $\omega(\tau)$.
	\end{proposition}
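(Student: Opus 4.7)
The plan is to build $\omega$ as the composition of the bijections that have already been established in this section. Starting from $\tau \in \fkI_n(132)$ with $k = C_2(\tau)$ two-cycles, the proof of Proposition~\ref{p:132} gives a canonical decomposition of $\tau$ into two pieces: a permutational matching $\rho \in \fkF_{2k}(2143)$, obtained by standardizing the 2-cycles of $\tau$ and equivalent via the map $f$ from the proof of Theorem~\ref{t:singleton} to a permutation $\sigma \in \fkS_k$, together with a weak composition $c \in \Y_{n-2k,\,k+1}$ recording how the $n-2k$ fixed points of $\tau$ are distributed among the $k+1$ gaps delimited by the right endpoints of those 2-cycles. This identifies $\fkI_n(132)$ bijectively with $\bigsqcup_{k=0}^{\lfloor n/2 \rfloor} \fkS_k \times \Y_{n-2k,\,k+1}$, graded by the number of 2-cycles.

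From this decomposition I would apply the Fran\c{c}on--Viennot bijection $\chi \colon \fkS_k \to \LH_{k-1}$ from Lemma~\ref{lem:francon-viennot} to $\sigma$, and then the inverse of $\phi \colon \DP_k' \to \LH_{k-1}$ from Lemma~\ref{l:dyck-path} to obtain a labeled Dyck path $P = \phi^{-1}(\chi(\sigma)) \in \DP_k'$. Pairing $P$ with the weak composition $c$ and applying the inverse of the Pan--Zeng bijection $\psi$ stated in the preceding lemma then yields an Andr\'e path $\omega(\tau) \in \AP_{n, n-2k} \subseteq \AP_n$. Since each of $f$, $\chi$, $\phi$, and $\psi$ is a bijection on its stratum, the composition $\omega$ is a bijection $\fkI_n(132) \to \AP_n$ that sends involutions with exactly $k$ two-cycles precisely to $\AP_{n, n-2k}$.

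The level-step statement is then immediate from this grading: $\omega(\tau) \in \AP_{n, n-2k}$ has exactly $n - 2k$ level steps, which equals $C_1(\tau)$. The main technical obstacle is essentially bookkeeping: confirming that the indexing conventions used by $\chi$, $\phi$, and $\psi$ compose consistently, and in particular that the $k+1$ slots into which $\psi^{-1}$ inserts $L$'s line up with the $k+1$ gaps among the right endpoints of $\tau$ in which its fixed points have been placed. A quick sanity check is provided by the base case $k = 0$, where $\sigma$ is the empty permutation, $c = (n)$, and $\omega(\tau) = L^n$, corresponding to the unique involution of length $n$ with no 2-cycles.
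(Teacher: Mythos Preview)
Your proposal is correct and follows exactly the paper's own argument: stratify $\fkI_n(132)$ by the number $k$ of $2$-cycles, pass to $\fkS_k$ via the permutational-matching bijection, apply $\phi^{-1}\circ\chi$ to land in $\DP'_k$, and then use $\psi^{-1}$ together with the composition recording fixed-point positions to obtain an Andr\'e path in $\AP_{n,n-2k}$. Your caution about the $k+1$ insertion slots for $\psi^{-1}$ is well placed---the paper's own statement of $\psi$ writes $\Y_{n,k}$ and ``$y_i$ $L$'s after $M_{2i}$'' with $k$ parts, but the worked example (and the fact that Andr\'e paths may begin with $L$) shows that $k+1$ slots, including one before $M_1$, is what is actually meant.
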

\begin{proof}
	Let $\fkI_{n,k}(132) = \{\tau \in \fkI_n(132): C_2(\tau) = k\}$.
	We have the bijection
	\[
	\fkI_{n,k}(132) \to \fkF_{k}(2143) \times \Y_{n,k} \to \fkS_k \times \ \xrightarrow{(\phi \circ \chi,id)} \DP'_k \times Y_{n,k} \xrightarrow{\psi} \AP_{n,n-2k}.
	\]
	Here, the first map comes by recording the position of fixed points and the second by the bijection $\fkF_{2n}(2143) \to \fkS_n$.
	Summing over $k$, we obtain the desired bijection.
\end{proof}

\subsection{Enumeration of $\fkI(123)$}
In this section, we prove Theorem~\ref{t:I_3-avoidance}~(c):
\[ |\fkI_n(123)| = \sum_{k=1}^n {\textstyle \left\lfloor\frac{k}{2}\right\rfloor!\, \left\lfloor\frac{n-k}{2}\right\rfloor !} \,\binom{n - \left\lfloor \frac{k}{2} \right\rfloor - 1}{n-k}. 
\]
Assume $n \ge 1$ for the remainder of this section.

For $\tau \in \fkI$, let $\Cyc(\tau) = \{(a,b): a \leq \tau(a) = b\}$.
Given $(a,b),(c,d) \in \Cyc(\tau)$, say $(a,b)$ is \textbf{to the left} of $(c,d)$ (and $(c,d)$ is \textbf{to the right} of $(a,b)$) if $b < c$.
In this case, we say $(a,b)$ and $(c,d)$ are \textbf{independent}.
We say $(a,b) \in \Cyc(\tau)$ is a \textbf{left-to-right minimum} if there is no cycle to the left of $(a,b)$.
Let $\LR(\tau)$ be the set of all entries of left-to-right minima in $\tau$; that is, $\LR(\tau) = \bigcup \{a,b\}$ where the union ranges over left-to-right minima $(a,b)$ of $\tau$.
For example, with $\tau = 426153$ we have
\[
\Cyc(\tau) = \{(1,4), (2,2), (3,6), (5,5)\}
\]
with independent pairs $\{(1,4),(5,5)\}, \{(2,2),(3,6)\}$ and $\{(2,2),(5,5)\}$.
Then the left-to-right minima of $\tau$ are $(1,4)$ and $(2,2)$, and $\LR(\tau) = \{1,2,4\}$.

An $\fkI$-occurrence of $12$ in an involution $\tau$ corresponds to an independent pair of cycles of $\tau$.
Similarly, an $\fkI$-occurrence of $123$ in $\tau$ corresponds to three pairwise independent cycles of $\tau$.
Using this observation, we show an involution in $\fkI(123)$ can be constructed as the union of two involutions in $\fkI(12)$.
This is analogous to the classical characterization of $\fkS(123)$ as the set of permutations that are a union of two permutations in $\fkS(12)$, i.e.\ decreasing subsequences.
This viewpoint justifies our use of ``left-to-right minimum''.

\begin{proposition} \label{prop:union12}
Let $\tau \in \fkI$.
Then $\tau \in \fkI(123)$ if and only if $[n]$ can be partitioned into two sets, $[n] = I \sqcup J$, such that $\st(\tau|_I)$ and $\st(\tau|_J)$ are involutions that $\fkI$-avoid $12$.
Moreover, when this condition holds, we can take $I = \LR(\tau)$ and $J = [n] \smallsetminus \LR(\tau)$.\end{proposition}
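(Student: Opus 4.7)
The plan is to translate both $\fkI$-containment of $12$ and of $123$ into statements about pairwise independent cycles, and then realize the desired partition explicitly. The key cycle-theoretic dictionary is: $\tau$ $\fkI$-contains $12$ (respectively $123$) if and only if $\tau$ has two (respectively three) pairwise independent cycles. Indeed, given such a collection of independent cycles, deleting all remaining fixed points and $2$-cycles via relations (1) and (2) of Definition~\ref{def:I-avoidance}, and then collapsing each surviving $2$-cycle to a fixed point via relation~(3), produces the corresponding monotone identity pattern; conversely any $\fkI$-occurrence of an identity pattern must come from pairwise independent cycles.

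For the ``if'' direction, suppose $[n] = I \sqcup J$ with $\st(\tau|_I), \st(\tau|_J) \in \fkI(12)$. Since each standardization is required to be an involution, both $I$ and $J$ must be closed under $\tau$, hence each is a union of $\tau$-cycles. If $\tau$ were to $\fkI$-contain $123$, it would have three pairwise independent cycles, two of which (by pigeonhole) lie in the same part; these two independent cycles would give an $\fkI$-occurrence of $12$ in one of the standardizations, contrary to assumption.

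For the converse, I would take $I = I(\tau)$ and $J = [n] \smallsetminus I(\tau)$; both are unions of cycles of $\tau$ by the definition of $I(\tau)$, so the two standardizations are automatically involutions. It remains to check that no two cycles of $\text{LR}(\tau)$ are independent and that no two cycles of $\Cyc(\tau) \smallsetminus \text{LR}(\tau)$ are independent. The first is immediate from the definition: if $(a_1,b_1),(a_2,b_2) \in \text{LR}(\tau)$ were independent with $b_1 < a_2$, then $(a_1,b_1)$ would lie entirely to the left of $(a_2,b_2)$ and disqualify the latter from being a left-to-right minimum. The second uses the hypothesis $\tau \in \fkI(123)$: if $(a_1,b_1)$ and $(a_2,b_2)$ are non-minimal cycles with $b_1 < a_2$, then by non-minimality of $(a_1,b_1)$ there is a cycle $(c,d)$ with $d < a_1$; the triple $(c,d), (a_1,b_1), (a_2,b_2)$ is pairwise independent and thus forces a $123$ $\fkI$-pattern, a contradiction.

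The main obstacle is being scrupulous about the cycle-theoretic dictionary: because the ``cycles'' involved can be either fixed points or genuine $2$-cycles, one must check a handful of small cases (both $2$-cycles, one fixed and one $2$-cycle, both fixed) to be sure that relations~(1)--(3) do in fact produce the required monotone patterns. Once this observation is recorded, the rest of the argument is essentially the definition of left-to-right minimum together with one pigeonhole.
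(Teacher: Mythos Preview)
Your argument follows the paper's almost exactly: both use the dictionary between $\fkI$-containment of $12$/$123$ and pairwise independent cycles, both take $I = I(\tau)$ for the explicit partition in the forward direction, and both invoke pigeonhole on the three independent cycles for the converse.

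One step does not hold as stated: the assertion that ``since each standardization is required to be an involution, both $I$ and $J$ must be closed under $\tau$'' is false. For example, with $\tau = 4321$ and $I = \{1\}$, $J = \{2,3,4\}$, we get $\st(\tau|_I) = 1$ and $\st(\tau|_J) = 321$, both involutions in $\fkI(12)$, yet neither $I$ nor $J$ is a union of $\tau$-cycles. Your pigeonhole (``two of which lie in the same part'') then does not immediately apply, since a cycle may be split across $I$ and $J$. That said, the paper's proof makes the identical implicit leap (``two of these cycles must be in the same part of the partition'') without addressing why cycles cannot be split, so you are matching the paper's level of rigor here; and since the only partition used in the subsequent lemmas is $I(\tau)$, which \emph{is} $\tau$-closed by construction, the gap does not affect anything downstream.
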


\begin{proof}
Let $\tau \in \fkI(123)$. Define $I = \LR(\tau)$, $J = [n] \smallsetminus \LR(\tau)$, $\alpha = \st(\tau|_I)$, and $\beta = \st(\tau|_J)$.
We claim that $\alpha,\beta \in \fkI(12)$.
First, note $(a,b),(c,d) \in \Cyc(\alpha)$ cannot be independent, else one would not be a left-to-right minimum.
Therefore $\alpha \in \fkI(12)$.
Similarly, if $\beta \notin \fkI(12)$ then $\beta$ has a cycle $(a,b)$ to the left of some $(c,d)$.
Since $(a,b)$ is not a left-to-right minimum in $\tau$ (by definition of $J$), it has a cycle to its left in $\tau$, resulting in three pairwise independent cycles in $\tau$, a contradiction.

Now let $\tau \in \fkI$ and let $[n] = I \sqcup J$ be a partition such that $\alpha = \st(\tau|_I)$ and $\beta = \st(\tau|_J)$ are in $\fkI(12)$.
Suppose $\tau \notin \fkI(123)$, meaning that $\tau$ has three pairwise independent cycles. Two of these cycles must be in the same part of the partition (either $I$ or $J$), resulting in either $\alpha$ or $\beta$ having two independent cycles.
This contradicts the fact that $\alpha$ and $\beta$ are both in $\fkI(12)$.
\end{proof}

\begin{lemma} \label{lem:ltr-min1}
Let $\tau \in \fkI_n(123)$, and set $k = |\LR(\tau)|$.
Then $\{1, \ldots, \lfloor k/2 \rfloor + 1\} \subseteq \LR(\tau)$.
\end{lemma}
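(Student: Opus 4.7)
The plan is a proof by contradiction: suppose $j_1 := \min([n] \setminus I(\tau)) \le \lfloor k/2 \rfloor + 1$. By minimality of $j_1$, $\{1, \ldots, j_1 - 1\} \subseteq I(\tau)$, so $|I(\tau) \cap [1, j_1 - 1]| = j_1 - 1$ and, since $j_1 \notin I(\tau)$, $|I(\tau) \cap [j_1+1, n]| = k - j_1 + 1$. I will also use that the number $m$ of left-to-right minima of $\tau$ satisfies $m = \lceil k/2 \rceil$: each 2-cycle LR minimum contributes 2 elements to $I(\tau)$, each fixed-point LR minimum contributes 1, and there is at most one fixed-point LR minimum (two such fixed points $(a,a)$ and $(a',a')$ with $a < a'$ would put $(a,a)$ strictly to the left of $(a',a')$).

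Next I would classify each LR minimum $C_i = (a_i, b_i)$ into three types relative to $j_1$: Type A with $b_i < j_1$; Type B with $a_i < j_1 \le b_i$ (necessarily a 2-cycle); and Type C with $a_i \ge j_1$. The crucial step is to rule out Type C. Since $j_1 \in J := [n] \setminus I(\tau)$, the cycle of $\tau$ containing $j_1$ is not an LR minimum, so some cycle $(c',d')$ lies strictly to its left with $d' < j_1$. If Type C existed, then $d' < j_1 \le a_i$ would put $(c',d')$ also strictly to the left of $C_i$, contradicting its LR property.

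With Type C excluded, the elements of $I(\tau) \cap [j_1+1, n]$ are precisely the values $b_i$ for Type B LR minima, one per such minimum (Type A contributes nothing above $j_1$, and $b_i \neq j_1$ because $j_1 \notin I(\tau)$). Letting $p$ and $q$ count Type A and Type B minima respectively, this gives $q = k - j_1 + 1$ and $p + q = m$. The assumption $j_1 \le \lfloor k/2 \rfloor + 1$ forces $q \ge k - \lfloor k/2 \rfloor = \lceil k/2 \rceil = m$, so $q = m$ and $p = 0$. But the cycle $(c',d')$ extracted in the Type C exclusion step has $d' < j_1$, so it lies entirely inside $[1, j_1 - 1]$; it cannot be a non-LR-minimum (its endpoints would then belong to $J$ below $\min J = j_1$), and hence must be an LR minimum, necessarily Type A --- contradicting $p = 0$.

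The main obstacle is setting up the right bookkeeping of which LR minima can live on which side of $j_1$. Once the Type C exclusion (the key geometric step) is in hand, the identity $q = k - j_1 + 1$ becomes tight at exactly $j_1 = \lfloor k/2 \rfloor + 1$, which is why the lemma's threshold is sharp; the contradiction then arises because the very cycle $(c',d')$ used to kill Type C also forces the existence of a Type A LR minimum, in direct conflict with the counting bound.
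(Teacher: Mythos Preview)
Your argument is correct. Notably, it never invokes the hypothesis $\tau \in \fkI_n(123)$, so you have in fact proved the stronger statement that $\{1,\ldots,\lfloor k/2\rfloor+1\} \subseteq I(\tau)$ holds for every involution $\tau$.

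The paper proceeds differently and more briefly. It invokes Proposition~\ref{prop:union12} to obtain $\alpha = \st(\tau|_{I(\tau)}) \in \fkI(12)$, then uses the explicit skew-sum form $\sigma \ominus \sigma^{-1}$ or $\sigma \ominus 1 \ominus \sigma^{-1}$ to read off directly that every LR-minimum cycle $(i,i')$ satisfies $i' \ge \lfloor k/2 \rfloor + 1$. From there it shows each $j \in J$ exceeds such an $i'$, explicitly using $123$-avoidance to guarantee that the cycle witnessing non-minimality of $(j,j')$ is itself an LR minimum. Your proof replaces this structural shortcut with a classification of LR minima relative to $j_1 = \min J$ and a tight counting argument pinned to the identity $m = \lceil k/2 \rceil$. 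The paper's route is quicker given the machinery already in place; yours is self-contained, makes the sharpness of the threshold $\lfloor k/2\rfloor+1$ transparent, and shows the $\fkI(123)$ hypothesis is not needed for this lemma.
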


\begin{proof}
Set $I = \LR(\tau)$
and $\alpha = \st(\tau|_I)$.
By Proposition \ref{prop:union12}, we have $\alpha \in \fkI(12)$, meaning that $\alpha$ has the form $\alpha = \sigma \ominus \sigma^{-1}$ or $\alpha = \sigma \ominus 1 \ominus \sigma^{-1}$.
 Thus, every $(x,y) \in \Cyc(\alpha)$ satisfies $y \ge \lfloor k/2 \rfloor + 1$.
 Therefore, if $(i,i') \in \Cyc(\tau)$ with $i,i' \in I$, then $i' \ge \lfloor k/2 \rfloor + 1$.

Set $J = [n] \smallsetminus I$, and let $(j,j') \in \Cyc(\tau)$ with $j,j' \in J$. Since $(j,j') \in \Cyc(\tau)$ is not a left-to-right minimum, there is some $(i,i') \in \Cyc(\tau)$ to its left, and since $\tau \in \fkI_n(123)$ we have $i,i' \in I$.
Then $j > i \ge \lfloor k/2 \rfloor + 1$. Therefore every element of $J$ is greater than $\lfloor k/2 \rfloor + 1$.
\end{proof}

\begin{lemma} \label{lem:ltr-min2}
Let $[n] = I \sqcup J$ be a partition of $[n]$. Set $k = |I|$ and assume that $\{1, \ldots, \lfloor k/2 \rfloor + 1\} \subseteq I$. Let $\alpha \in \fkI_k(12)$ and $\beta \in \fkI_{n-k}(12)$. Define $\tau \in \fkI_n$ to be the unique involution with $\st(\tau|_I) = \alpha$ and $\st(\tau|_J) = \beta$.
Then $\tau \in \fkI(123)$ and $I = \LR(\tau)$.
\end{lemma}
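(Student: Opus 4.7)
The first claim, $\tau\in\fkI(123)$, follows immediately from the ``if'' direction of Proposition~\ref{prop:union12}: by construction $\st(\tau|_I)=\alpha$ and $\st(\tau|_J)=\beta$ are involutions that $\fkI$-avoid $12$, so $\tau\in\fkI(123)$ at once.

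For the equality $I(\tau)=I$, I would write $I=\{i_1<\cdots<i_k\}$ and $J=\{j_1<\cdots<j_{n-k}\}$. The hypothesis $\{1,\ldots,\lfloor k/2\rfloor+1\}\subseteq I$ forces $i_t=t$ for every $t\le\lfloor k/2\rfloor+1$, and consequently $j_1\ge\lfloor k/2\rfloor+2$. The key structural input (implicit in the proof of Lemma~\ref{lem:ltr-min1}) is that Theorem~\ref{t:singleton} gives $\gamma=\sigma^{-1}\ominus\sigma$ or $\gamma=\sigma^{-1}\ominus 1\ominus\sigma$ for any $\gamma\in\fkI_m(12)$; in particular every cycle $(x,y)\in\Cyc(\gamma)$ satisfies $x\le\lfloor m/2\rfloor+1\le y$, and some cycle of $\gamma$ attains $y=\lfloor m/2\rfloor+1$ (the median fixed point when $m$ is odd, and the cycle with smallest larger endpoint when $m$ is even).

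To show $I\subseteq I(\tau)$, take any cycle $C$ of $\tau$ supported in $I$; applying the structural fact to $\alpha$, the smaller endpoint of $C$ is $i_c=c\le\lfloor k/2\rfloor+1$. No other $\alpha$-cycle lies to its left, since any such cycle has larger endpoint $i_{\alpha(c')}\ge i_{\lfloor k/2\rfloor+1}=\lfloor k/2\rfloor+1\ge i_c$; and no $\beta$-cycle lies to its left either, since every $\beta$-cycle has its right endpoint at least $j_1\ge\lfloor k/2\rfloor+2>i_c$. Hence $C$ is a left-to-right minimum. Conversely, for a cycle $C'$ of $\tau$ supported in $J$, the smaller endpoint of $C'$ is at least $j_1\ge\lfloor k/2\rfloor+2$; the witness cycle of $\alpha$ from the structural fact has right endpoint $i_{\lfloor k/2\rfloor+1}=\lfloor k/2\rfloor+1<j_1$ in $\tau$, so it lies strictly to the left of $C'$, showing that $C'$ is not a left-to-right minimum. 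Combining the two inclusions gives $I(\tau)=I$.

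The only mild bookkeeping issue is producing the witness cycle of $\alpha$ with larger endpoint exactly $\lfloor k/2\rfloor+1$; this is read off directly from either of the two forms in Theorem~\ref{t:singleton} and absorbs the small edge case $k=1$ (where $\alpha$ is a single fixed point at $1$) uniformly. Beyond that, the argument is a routine comparison of indices made possible by the hypothesis $i_t=t$ for $t\le\lfloor k/2\rfloor+1$.
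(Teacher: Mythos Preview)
Your proposal is correct and follows essentially the same approach as the paper's proof: both invoke Proposition~\ref{prop:union12} for $\tau\in\fkI(123)$, then use the skew-sum structure of elements of $\fkI_m(12)$ to show that every $I$-cycle has small endpoint at most $\lfloor k/2\rfloor+1$ (hence is a left-to-right minimum) while a distinguished $I$-cycle with larger endpoint exactly $\lfloor k/2\rfloor+1$ witnesses that every $J$-cycle is not. Your explicit indexing $i_t=t$ for $t\le\lfloor k/2\rfloor+1$ corresponds precisely to the paper's observations (a)--(c); the only cosmetic difference is that the structural decomposition you attribute to Theorem~\ref{t:singleton} is really drawn from its \emph{proof} rather than its statement.
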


\begin{proof}
That $\tau \in \fkI_n(123)$ follows immediately from Proposition \ref{prop:union12}; we need only to prove $I = \LR(\tau)$. Since $\alpha \in \fkI_k(12)$, it has the form $\alpha = \sigma \ominus \sigma^{-1}$ or $\alpha = \sigma \ominus 1 \ominus \sigma^{-1}$. 
Since $\{1, \ldots, \lfloor k/2 \rfloor + 1\} \subseteq I$, we can make these observations:
\begin{itemize}
\item[(a)] If $(i,i') \in \Cyc(\tau)$ with $i,i' \in I$, then $i \in \{1, \ldots, \lceil k/2 \rceil\}$;
\item[(b)] If $(x,y) \in \Cyc(\tau)$, then $y \notin \{1, \ldots, \lfloor k/2 \rfloor\}$;
\item[(c)] $(a,\lfloor k/2 \rfloor + 1) \in \Cyc(\tau)$ for some $a$.
\end{itemize}
(When $k$ is odd, note that $\lceil k/2 \rceil = \lfloor k/2 \rfloor + 1$ is a fixed point).

Observations (a) and (b) together imply that, if $(i,i') \in \Cyc(\tau)$ with $i,i' \in I$, then there is no $(x,y) \in \Cyc(\tau)$ with $y < i$ --- meaning $(i,i')$ is a left-to-right minimum of $\tau$, and $(i,i') \in \LR(\tau)$.
Therefore $I \subseteq \LR(\tau)$. On the other hand, if $(j,j') \in \Cyc(\tau)$ with $j,j' \in J$, then $j > \lfloor k/2 \rfloor + 1$.
Observation (c) above now implies that $(j,j')$ is to the right of some cycle $(a,\lfloor k/2 \rfloor + 1)$, and thus $(j,j')$ is not a left-to-right minimum and $(j,j') \notin \LR(\tau)$.
Therefore $I \supseteq \LR(\tau)$.
\end{proof}

\begin{proof}[Proof of Theorem~\ref{t:I_3-avoidance}~(c)]
Lemmas~\ref{lem:ltr-min1} and~\ref{lem:ltr-min2} establish a one-to-one correspondence between $\fkI_n(123)$ and the set of tuples $(k,I,\alpha,\beta)$ such that $\{1, \ldots, \lfloor k/2 \rfloor + 1\} \subseteq I \subseteq [n]$, $|I| = k$, $\alpha \in \fkI_k(12)$, and $\beta \in \fkI_{n-k}(12)$.
The condition $[ \lfloor k/2 \rfloor +1 ] \subseteq I$ is equivalent to $[n] \smallsetminus I$ being a subset of $\{\lfloor k/2 \rfloor + 2, \ldots, n\}$, so there are $\binom{n-\lfloor k/2 \rfloor - 1}{n-k}$ options for $I$.
Therefore, for a fixed $k$, the number of tuples with the properties above equals
\[ \binom{n-\lfloor k/2 \rfloor - 1}{n - k} \cdot |\fkI_k(12)| \cdot |\fkI_{n-k}(12)|. \]
Since $|\fkI_m(12)| = \left\lfloor \frac{m}{2} \right\rfloor !$, the result follows by summing over $k$ from $1$ to $n$.
\end{proof}

\subsection{Relating $\fkI$-avoidance to $\fkF$-avoidance}\

Recall the well known fact that the exponential generating function for $\fkF$ and $\fkI$ are
\[
F(x) = e^{x^2/2} \quad \mbox{and} \quad I(x) = e^{x^2/2} \cdot e^x.
\]
Our results in this section demonstrate for $\cR \subseteq \fkF$ an equivalent relationship between the exponential generating functions of $\fkI(\cR)$ and $\fkF(\cR)$.
Furthermore, we explain how term $e^x$ tracks fixed points in this setting.

For $\pi \in \fkS_n$, let $\Fix(\pi) = \{i \in [n] : \pi(i) = i\}$ be the set of fixed points and $\fix(\pi) = |\Fix(\pi)|$  be the number of fixed points.
For $\cR \subseteq \fkI$, $S \subseteq [n]$, and $m \in \mathbb{N}$, let
\[ \fkI_{n,S}(\cR) = \{\tau \in \fkI_n(\cR) : \Fix(\tau) = S\} \quad \text{and} \quad \fkI_{n,m}(\cR) = \{\tau \in \fkI_n(\cR) : \fix(\tau) = m\}. \]
That is, $\fkI_{n,S}(\cR)$ is the set of involutions in $\fkI_n(\cR)$ whose set of fixed points is $S$, and $\fkI_{n,m}(\cR)$ is the set of involutions in $\fkI_n(\cR)$ with $m$ fixed points.

\begin{proposition} \label{prop:fixedpoints}
Let $\cR \subseteq \fkF$, let $S \subseteq [n]$, and set $m = |S|$.
Then 
\[
|\fkI_{n,S}(\cR)| = |\fkF_{n-m}(\cR)|,\quad \mbox{hence} \quad|\fkI_{n,m}(\cR)| = \binom{n}{m} |\fkF_{n-m}(\cR)|.
\]
\end{proposition}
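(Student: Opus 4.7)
The plan is to exhibit an explicit bijection $\Phi : \fkI_{n,S}(\cR) \to \fkF_{n-m}(\cR)$. Set $\Phi(\tau) = \tilde\tau := \st(\tau|_{[n] \setminus S})$, the standardization of $\tau$ on its non-fixed positions; since $\tau$ restricts to a fixed-point-free involution on $[n]\setminus S$, we have $\tilde\tau \in \fkF_{n-m}$. The inverse takes $\tilde\tau \in \fkF_{n-m}$ and, writing $[n] \setminus S = \{j_1 < \cdots < j_{n-m}\}$, returns the unique $\tau \in \fkI_n$ with $\tau(i) = i$ for $i \in S$ and $\tau(j_k) = j_{\tilde\tau(k)}$ for $k \in [n-m]$. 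That $\Phi$ is a bijection from involutions of size $n$ with fixed-point set $S$ to $\fkF_{n-m}$ is routine, so the proposition reduces to showing it respects avoidance: $\tau \in \fkI_n(\cR)$ iff $\tilde\tau \in \fkF_{n-m}(\cR)$.

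This reduces in turn to the key lemma: for any $\tau \in \fkI$ with fixed-point-free part $\tilde\tau$ and any $\rho \in \fkF$, $\tau$ $\fkI$-contains $\rho$ iff $\tilde\tau$ $\fkF$-contains $\rho$. The $(\Leftarrow)$ direction is immediate by first applying relation (2) of Definition~\ref{def:I-avoidance} repeatedly to delete all fixed points of $\tau$, reducing to $\tilde\tau$, and then concatenating with any $\fkF$-containment chain (which uses only relation (1)). For $(\Rightarrow)$, I would rearrange an arbitrary $\fkI$-containment chain witnessing $\tau \to \rho$ into this normal form. Two observations make this possible: (i) since $\rho \in \fkF$ has no fixed points, every fixed point appearing in any intermediate involution must eventually be deleted via relation (2); and (ii) no cover relation ever promotes a fixed point back into part of a 2-cycle, so fixed points persist until deleted. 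Hence each application of relation (3), which produces a new fixed point, can be pushed to be immediately followed by the relation (2) that removes that same fixed point, yielding a composite equivalent to relation (1). Once every step of the chain is an instance of (1) or (2), the deletions of the original fixed points of $\tau$ --- acting on disjoint positions --- commute with the remaining operations and can all be moved to the front, yielding a factorization $\tau \to \tilde\tau \to \rho$ whose second leg witnesses $\fkF$-containment of $\rho$ in $\tilde\tau$.

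Given the key lemma, the first equality of the proposition follows, and the second is obtained by partitioning $\fkI_{n,m}(\cR) = \bigsqcup_{|S|=m} \fkI_{n,S}(\cR)$ and summing over the $\binom{n}{m}$ choices of $S$. The main obstacle is the rearrangement argument in $(\Rightarrow)$, which is delicate because each cover relation is followed by a standardization that relabels positions; I would handle this by tracking, at each stage, the subset of original positions of $\tau$ still present, rather than working in the ever-changing relabeled coordinates, so that statements like ``this application of relation (3) commutes with this application of relation (2)'' make unambiguous sense.
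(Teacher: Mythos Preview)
Your approach is the same as the paper's: define $\Phi(\tau) = \st(\tau|_{[n]\setminus S})$, check it is a bijection $\fkI_{n,S}(\cR) \to \fkF_{n-m}(\cR)$, and then sum over $S \in \binom{[n]}{m}$. The only difference is in how the key lemma is justified. The paper dispatches your $(\Rightarrow)$ direction in one line: since no $\sigma \in \cR$ has a fixed point, any $\fkI$-occurrence of $\sigma$ in $\tau$ is supported entirely on $2$-cycles of $\tau$ (the cover relations (1)--(3) never turn a fixed point into part of a $2$-cycle, so each $2$-cycle of $\sigma$ traces back to a $2$-cycle of $\tau$), and those $2$-cycles already witness an $\fkF$-occurrence of $\sigma$ in $\tilde\tau$. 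Your chain-rearrangement argument reaches the same conclusion and is correct, but it is more elaborate than necessary; the direct observation that $2$-cycles pull back to $2$-cycles lets you avoid the commutation bookkeeping altogether.
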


\begin{proof}
For $S \in \binom{[n]}{m}$, define $\Phi : \fkI_{n,S}(\cR) \to \fkF_{n-m}(\cR)$ by $\Phi(\tau) = \st(\tau|_{[n] \smallsetminus S})$ --- that is, $\Phi$ removes the fixed points from $\tau$.
Clearly $\Phi$ is injective, and $\Phi$ is surjective precisely because no involution in $\cR$ has a fixed point.
Indeed, if $\rho \in \fkF_{n-m}(\cR)$ and we insert fixed points into $\rho$ to obtain an involution $\tau$ with $\Fix(\tau) = S$, then any occurrence in $\tau$ of some $\sigma \in \cR$ would involve only the $2$-cycles of $\tau$ and hence would be an occurrence of $\sigma$ in $\rho$.
Thus, $\Phi$ is a bijection, proving the first equality of the proposition.
The second equality follows by summing over all $S \in \binom{[n]}{m}$.
\end{proof}

\begin{corollary}
\label{c:fpf-to-I}
For $\cR, \cR' \subseteq \fkF$, if $|\fkF_n(\cR)| = |\fkF_n(\cR')|$ for all $n$, then $|\fkI_{n,S}(\cR)| = |\fkI_{n,S}(\cR')|$ and $|\fkI_{n,m}(\cR)| = |\fkI_{n,m}(\cR')|$ for all $n$ and all $S \subseteq [n]$ and all $m$. \hfill $\qed$
\end{corollary}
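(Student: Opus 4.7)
The plan is to deduce this corollary as an essentially immediate consequence of Proposition~\ref{prop:fixedpoints}, which is the substantive result of this subsection. The corollary has no real content beyond transporting the equality $|\fkF_n(\cR)| = |\fkF_n(\cR')|$ through the bijection $\Phi$ constructed there.

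First, I would fix $n$ and an arbitrary $S \subseteq [n]$, and set $m = |S|$. Applying Proposition~\ref{prop:fixedpoints} to each of $\cR$ and $\cR'$ individually gives the two identities
\[
|\fkI_{n,S}(\cR)| = |\fkF_{n-m}(\cR)| \quad \text{and} \quad |\fkI_{n,S}(\cR')| = |\fkF_{n-m}(\cR')|.
\]
The hypothesis $|\fkF_k(\cR)| = |\fkF_k(\cR')|$ for all $k$, specialized to $k = n-m$, forces the right-hand sides to agree, hence $|\fkI_{n,S}(\cR)| = |\fkI_{n,S}(\cR')|$.

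For the second equality, I would sum over $S \in \binom{[n]}{m}$: since $\fkI_{n,m}(\cR) = \bigsqcup_{|S|=m} \fkI_{n,S}(\cR)$ (and similarly for $\cR'$), the pointwise equality in $S$ already established yields $|\fkI_{n,m}(\cR)| = |\fkI_{n,m}(\cR')|$. Equivalently, one can just apply the second equation of Proposition~\ref{prop:fixedpoints} directly to both $\cR$ and $\cR'$ and invoke the hypothesis.

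There is no real obstacle here; the only thing to be careful about is that Proposition~\ref{prop:fixedpoints} genuinely requires $\cR \subseteq \fkF$ (so that no pattern has a fixed point and the fixed-point-insertion bijection $\Phi$ is surjective), but this hypothesis is built into the statement of the corollary, so the implication goes through verbatim for both $\cR$ and $\cR'$.
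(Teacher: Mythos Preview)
Your proposal is correct and matches the paper's approach exactly: the corollary is marked with a terminal $\qed$ and no separate proof, indicating it is an immediate consequence of Proposition~\ref{prop:fixedpoints}, which is precisely what you spell out. Nothing more is needed.
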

Note Corollary~\ref{c:stanley} is a special case of Corollary~\ref{c:fpf-to-I}.

We now use Proposition \ref{prop:fixedpoints} to obtain a result on the bivariate exponential generating functions whose parameter is the number of fixed points. For $\cR \subseteq \fkF$, define
\[ F_\cR(x) = \sum_{n\ge0} |\fkF_n(\cR)|\, \frac{x^n}{n!} \quad \text{and} \quad I_{\cR}(x,t) = \sum_{n\ge0} \sum_{m=0}^n |\fkI_{n,m}(\cR)|\,t^m\,\frac{x^n}{n!}. \]
For example, $F_\varnothing(x) = e^{x^2/2}$ and $I_\varnothing(x,t) = e^{tx + x^2/2}$.

\begin{proposition}
\label{p:fixed-point}
If $\cR \subseteq \fkF$, then $I_\cR(x,t) = e^{tx} F_\cR(x)$.
\end{proposition}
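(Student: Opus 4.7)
The plan is to derive this identity as a direct consequence of Proposition \ref{prop:fixedpoints}, which already tells us $|\fkI_{n,m}(\cR)| = \binom{n}{m}|\fkF_{n-m}(\cR)|$. This formula is precisely the binomial convolution identity that governs products of exponential generating functions, so the result should follow by a straightforward manipulation of the double sum defining $I_\cR(x,u)$.

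Concretely, I would substitute the expression from Proposition \ref{prop:fixedpoints} into the definition of $I_\cR(x,u)$, swap the order of summation (which is valid as a formal power series identity), and re-index. In symbols,
\[
I_\cR(x,u) = \sum_{n \ge 0} \sum_{m=0}^n \binom{n}{m} |\fkF_{n-m}(\cR)|\, u^m \,\frac{x^n}{n!} = \sum_{n \ge 0} \sum_{m=0}^n \frac{(ux)^m}{m!} \cdot \frac{|\fkF_{n-m}(\cR)| \,x^{n-m}}{(n-m)!},
\]
and setting $k = n-m$ decouples the double sum into the product
\[
\left(\sum_{m \ge 0} \frac{(ux)^m}{m!}\right) \left(\sum_{k \ge 0} \frac{|\fkF_k(\cR)|\, x^k}{k!}\right) = e^{ux} F_\cR(x).
\]

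There is really no obstacle here beyond bookkeeping; the essential content was already proven in Proposition \ref{prop:fixedpoints}, where the combinatorial bijection $\Phi$ removing fixed points from an involution in $\fkI_{n,S}(\cR)$ to obtain an element of $\fkF_{n-m}(\cR)$ was established. One could also present the argument bijectively by noting that the factor $e^{ux}$ is the exponential generating function for marked sets of fixed points (each contributing a weight $u$), and $F_\cR(x)$ is the exponential generating function of the complementary fixed-point-free part, so that the product corresponds via the labeled product of species to the decomposition of an element of $\fkI_n(\cR)$ into its set of fixed points together with the standardized 2-cycle part. Either framing yields the identity $I_\cR(x,u) = e^{ux} F_\cR(x)$, and setting $u = 1$ recovers Theorem \ref{t:fixed-points}.
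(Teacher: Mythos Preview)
Your proof is correct and takes essentially the same approach as the paper: both rely on Proposition~\ref{prop:fixedpoints} and the standard EGF product/binomial-convolution identity. The only cosmetic difference is that the paper compares the coefficient $[u^m x^n]$ on both sides, whereas you expand $I_\cR(x,u)$ and factor it directly; these are equivalent presentations of the same one-line computation.
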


\begin{proof}
We have
\[ [t^m x^n]\,e^{tx} F_\cR(x) = \frac{1}{m!} \cdot \frac{1}{(n-m)!} \cdot |\fkF_{n-m}| = \frac{1}{n!} \binom{n}{m} |\fkF_{n-m}| = [t^m x^n]\, I_\cR(x,t), \]
using Proposition \ref{prop:fixedpoints} for the last equality.
\end{proof}

Setting $t = 1$, we see $I_\cR(x,1) = e^x F_\cR(x)$, which is Equation~\eqref{eq:fixed-points}.

As an example, we enumerate $\fkI(2143)$: the generating function identity of Proposition~\ref{p:fixed-point} using $|\fkF_{2k}(2143)| = k!$ yields
\[ |\fkI_n(2143)| = \sum_{k=0}^{\lfloor n/2 \rfloor} \binom{n}{2k} k!, \]
which appears in OEIS as \cite[A084261]{sloane2018line}.

\subsection{Remarks on asymptotic enumeration}
Let $a_n$ be a sequence of complex numbers and let $c \ge 0$. We say that $a_n$ is \textbf{of exponential order $c^n$}, written $a_n \bowtie c^n$, if $\limsup_{n\to\infty} {|a_n|}^{1/n} = c$. This condition is equivalent to
\[ c = \inf \left\{ r>0 : \lim_{n\to\infty} \frac{|a_n|}{r^n} = 0 \right\}. \]
This is a coarse measure of asymptotic growth that can only distinguish between different bases of exponential growth.

By Corollary \ref{c:dichotomy}, for each $\tau \in \fkI$, either $|\fkI_n(\tau)| \bowtie c^n$ for some finite $c$ or $|\fkI_n(\tau)|$ is bounded below by $\lfloor n/2 \rfloor !$. The former case happens when $\tau \in \fkI(12)$, and here $\fkI$-avoidance coincides with ordinary pattern avoidance by Theorem \ref{t:singleton}. In the latter case, $\fkI$-avoidance is a distinct phenomenon. In this paper we have enumerated four $\fkI$ classes (up to geometric symmetry) that fall into this latter case: $\fkI(12)$, $\fkI(132)$, $\fkI(2143)$, and $\fkI(123)$.

\begin{table}
\[ \begin{array}{c|c}
\tau & |\fkI_n(\tau)| \\
\hline
12 & = (n/2)! \\
132 & \sim \frac12 e^{1/8} \, e^{\sqrt{n/2}} \, (n/2)! \\
2143 & \sim \frac12 e^{-1/2} \, e^{\sqrt{2n}} \, (n/2)! \\
123 & \sim \sqrt{\frac{2\pi}{27} n} \, {\left(\frac{2}{\sqrt{3}}\right)}^n \, (n/2)!
\end{array} \]
\caption{Asymptotic formula for $|\fkI_n(\tau)|$, for various involutions $\tau$, valid for even $n$. For $132$ and $2143$, the formula is taken from the existing OEIS entries, and we use Stirling's formula to convert the factor of $(n/2)^{n/2}$ to a factor of $(n/2)!$. For $123$, we obtain the formula directly from our enumeration in Theorem \ref{t:I_3-avoidance}~(c) by finding a Gaussian curve that the terms of the sum converge to as $n \to \infty$; the details are beyond the scope of this paper.\label{table:asymptotics}}
\end{table}

For these four classes and in general, it is natural to ask: if the enumeration is greater than $\lfloor n/2 \rfloor !$, then how much greater is it? Since $\frac{|\fkI_n|}{\lfloor n/2 \rfloor !} \bowtie {\left(\sqrt{2}\right)}^n$, we know that the exponential order of $\frac{|\fkI_n(\tau)|}{\lfloor n/2 \rfloor !}$ is between $1^n$ and ${\left(\sqrt{2}\right)}^n$. Table \ref{table:asymptotics} lists the asymptotic enumerations for the four classes, restricting to even $n$. For $\tau \in \{12, 132, 2143\}$, we find that $\frac{|\fkI_n(\tau)|}{\lfloor n/2 \rfloor !} \bowtie 1^n$; that is, $|\fkI_n(\tau)|$ is a sub-exponential function times $\lfloor n/2 \rfloor !$. However, $\frac{|\fkI_n(123)|}{\lfloor n/2 \rfloor !} \bowtie {\left(\frac{2}{\sqrt{3}}\right)}^n$. Why should the first three all be within a sub-exponential factor of each other while the fourth is higher?

If there is an underlying mathematical reason for this, it may become clearer if we look at the asymptotic enumeration of $\fkI(\tau)$ for other small involutions $\tau$ that $\fkI$-contain $12$. A reasonable next step would be to take $\tau$ of size $4$ that $\fkI$-contains $12$, in which case the remaining patterns to investigate (up to geometric symmetry) are $1234$, $1243$, $1324$, $1432$, and $4231$. In particular, we can hope that the enumeration of $\fkI(1324)$ and $\fkI(4231)$ is easier than the notoriously intractable $\fkS(1324)$ and $\fkS(4231)$.

\section{Future directions}
\label{sec:future}

\subsection{Connections to geometry}
One early application of pattern avoidance appears in the context of Schubert calculus, where 2143-avoiding or vexillary permutations take special prominence~\cite{lascoux1982structure}.
Abe and Billey have a wonderful survey on this topic~\cite{abe2016consequences}.
The results in Section~\ref{sec:mcgovern} are an extension of this theory.
One of the motivations of our work is to highlight the naturalness of $\fkI$-avoidance and $\fkF$-avoidance from a geometric perspective.

The \textbf{(complete) flag variety} $FL(n)$ is the space \textbf{flags}, sequences of vector spaces
\[
\{0\} = V_0 \subsetneq V_1 \subsetneq \dots \subsetneq V_n = \mathbb{C}^n.
\]
The left action by the group $B$ of invertible lower triangular matrices decomposes $FL(n)$ into cells called \textbf{Schubert varieties} indexed by $\fkS_n$.
Many geometric properties of Schubert varieties are characterized by pattern avoidance.

If one instead acts on $FL(n)$ by the orthogonal group $O(n)$ or the symplectic group $Sp(n)$ ($n$ even), one obtains varieties indexed by $\fkI_n$ and $\fkF_n$.
Such varieties are generally referred to as $K$-orbits.
McGovern's results are some of the first to characterize geometric properties of these varieties in terms of pattern avoidance.
Hamaker, Marberg and Pawlowski have shown many other properties of $K$-orbits are governed by $\fkI$ and $\fkF$-avoidance~\cite{hamaker2018involution,hamaker2019schur,hamaker2020fixed}.

Additionally, one can act on $FL(n)$ by $GL(p) \times GL(q)$ with $p + q = n$.
This action has orbits indexed by objects called \textbf{clans}, which are involutions whose fixed points are labeled $+$ or $-$.
The geometric properties of these orbits can also be described by pattern avoidance.
For example, Wyser has shown orbit closures indexed by $3412$-avoiding clans correspond to Richardson varieties~\cite{wyser2013schubert}.
Other work by Woo--Wyser~\cite{woo2015combinatorial} and Woo--Wyser--Yong~\cite{woo2018governing}, give clan pattern avoidance criteria for many other properties of these orbits.
Setting $t=2$ in Proposition~\ref{p:132} and Proposition~\ref{p:fixed-point} gives some enumerative results for clans.
We hope the permutation patterns community will make a deeper study of enumerative properties for pattern avoidance of clans.

\subsection{Other involution statistics}
The interplay between pattern avoidance and permutation statistics has grown into a rich area.
See for example~\cite{dokos2012permutation} and subsequent work.
Recently, Dahlberg began studying the interplay between ordinary pattern avoidance for involutions and permutation statistics~\cite{dahlberg2017permutation}.
We believe it would be interesting to investigate such properties for $\fkI$ and $\fkF$-avoidance.

As part of their work on the combinatorics of $K$-orbits, Hamaker, Marberg and Pawlowski have defined many novel statistics for involutions and fixed-point-free involutions.
Since these statistics are geometrically natural, they deserve further study.
Many of them are amenable to investigation from a pattern avoidance perspective.
We give a couple of examples and offer some questions to investigate.

For $\tau \in \fkI_n$, the \textbf{involution code} is the sequence $
	\hat{c}_1(\tau),\hat{c}_2(\tau), \dots, \hat{c}_{n-1}(\tau)$ with 
\[
\hat{c}_i(\tau) = |\{j\in [n]:\tau(j) \leq i < j \ \mbox{and}\ \tau(i) > \tau(j)\}|.
\]
Similarly, for $\rho \in \fkF$, the \textbf{fixed-point-free code} has
\[
\hat{c}^{FPF}_i(\rho) = |\{j\in [n]:\rho(j) < i < j \ \mbox{and}\ \rho(i) > \rho(j)\}|.
\]
Without the conditions $\tau(j) \leq i$ and $\rho(j) < i$, these would be the usual \textbf{code} of a permutation $\pi \in \fkS$ (i.e., the Lehmer code).
In the ordinary case, the pair $(i,j)$ forms an inversion.
The pairs $(i,j)$ counted by $\hat{c}_i(\tau)$ and $\hat{c}_i^{FPF}(\rho)$ are called \textbf{visible} and \textbf{FPF-visible inversions}, respectively.
For $\tau \in \fkI_n$ and $\rho \in \fkF_m$, the quantities $\sum_{i=1}^{n-1} \hat{c}_i(\tau)$ and $\sum_{i=1}^{2m-1} \hat{c}^{FPF}_i(\rho)$ measure the rank of these permutations in the restriction of the Bruhat order on $\fkS_n$ to $\fkI_n$ and $\fkF_{2n}$, respectively~\cite{hamaker2018involution}.
Several authors have studied pattern avoidance in codes, including~\cite{albert2003regular,corteel2016patterns,mansour2015pattern}.
We propose that pattern avoidance in involution and fixed-point-free codes is a topic that also deserves investigation.

By analogy with visible inversions,  for $\tau \in \fkI$ say $i$ is a \textbf{visible descent}  if $(i,i+1)$ is a visible inversion.
Similarly, for $\rho \in \fkF$, we say $i$ is an \textbf{FPF-visible descent} if $(i,i+1)$ is an FPF-visible inversion.
We propose that visible and FPF-visible descents are natural objects to consider.
While the Maj statistic associated to the sets of visible and FPF-visible descents are not equidistributed with the number of visible and FPF-visible inversion counts, we still believe interesting enumerative properties remain to be discovered.
Additionally, the interaction between (FPF-)visible descents and $\fkI$ and $\fkF$-avoidance provides a rich source of research questions.

\subsection{$\fkI/\fkF$-bases and enumeration}
One of the most important questions in pattern avoidance is identifying growth rates for classes.
We suspect Theorem~\ref{t:basis} offers a new perspective on growth rates for $\fkI_\fkS(\Pi)$ and $\fkF_\fkS(\Pi)$.
Recall for $\pi \in \{123,132,213,321\}$ we have $|\fkI_{\fkS_n}(\pi)| = \binom{n}{\lfloor n/2 \rfloor}$ and for $\pi \in \{231,312\}$ we have $|\fkI_{\fkS_n}(\pi)| = 2^n-1$ (see Table~\ref{tab:bases-algorithm}).
Meanwhile, the $\fkI$-basis of $\fkI_\fkS(\pi)$ for $\pi \in \{123,132,213,321\}$ consists of elements of $\fkI_3 \sqcup \fkI_5 \sqcup \fkI_6$ while for $\pi \in \{231,312\}$ the $\fkI$-basis consists of elements of $\fkI_4$.
It seems plausible that the length and quantity of elements in the $\fkI$-basis dictates asymptotic growth rates of $\fkI_\fkS$-classes (and likewise for $\fkF_\fkS$ classes).
A logical starting point would be to examine the $\fkI_\fkS$-classes for $\pi \in \fkS_4$.
The current state of the art appears in~\cite{bona2016pattern}, but the relative growth rates for such classes remains an open problem.
We hope the study of $\fkI$-bases will shed new light on this problem.

\subsection{Other cycle types} 
The key difference between $\fkI/\fkI'/\fkF$-containment and the usual notion of pattern containment is that we insist cycle structure be preserved.
One can easily identify notions of pattern avoidance that extend those we study to arbitrary cycle structure.
Recently, the combinatorics community has made a significant effort to understand the interaction between pattern containment/avoidance and the cycle structure of permutations~\cite{bona2019cyclic,gaetz2020stable}.
We propose that these questions should be revisited with the requirement that cycle structure be preserved.

%
%

\bibliographystyle{amsalpha} 
\newcommand{\etalchar}[1]{$^{#1}$}
\providecommand{\bysame}{\leavevmode\hbox to3em{\hrulefill}\thinspace}
\providecommand{\MR}{\relax\ifhmode\unskip\space\fi MR }
\providecommand{\MRhref}[2]{%
  \href{http://www.ams.org/mathscinet-getitem?mr=#1}{#2}
}
\providecommand{\href}[2]{#2}

\end{document}